%%
%% Copyright 2021 OXFORD UNIVERSITY PRESS
%%
%% This file is part of the 'ima-authoring-template Bundle'.
%% ---------------------------------------------
%%
%% It may be distributed under the conditions of the LaTeX Project Public
%% License, either version 1.2 of this license or (at your option) any
%% later version.  The latest version of this license is in
%%    http://www.latex-project.org/lppl.txt
%% and version 1.2 or later is part of all distributions of LaTeX
%% version 1999/12/01 or later.
%%
%% The list of all files belonging to the 'ima-authoring-template Bundle' is
%% given in the file `manifest.txt'.
%%
%% Template article for OXFORD UNIVERSITY PRESS's document class `ima-authoring-template'
%% with bibliographic references
%%

%\documentclass[numbers,webpdf,imaiai]{ima-authoring-template}%
\documentclass[namedate,webpdf,imanum]{ima-authoring-template}
%\usepackage{showframe}

%\graphicspath{{Fig/}}

% line numbers
%\usepackage[mathlines, switch]{lineno}
%\usepackage[right]{lineno}

\newcommand{\R}{\mathbb{R}}
\newcommand{\C}{\mathbb{C}}
\newcommand{\Z}{\mathbb{Z}}
\newcommand{\vx}{\mathbf{x}}
\newcommand{\rmd}{\mathrm{d}}
\newcommand{\vep}{\varepsilon}
\newcommand{\vphi}{\varphi}
\newcommand{\psihn}[1]{\psi^{#1}}

\usepackage[capitalize,nameinlink,noabbrev]{cleveref}
\crefname{equation}{}{}

\theoremstyle{thmstyletwo}%
\newtheorem{theorem}{Theorem}%  meant for continuous numbers
%%\newtheorem{theorem}{Theorem}[section]% meant for sectionwise numbers
%% optional argument [theorem] produces theorem numbering sequence instead of independent numbers for Proposition
\newtheorem{proposition}[theorem]{Proposition}%
\newtheorem{remark}{Remark}%

\numberwithin{equation}{section}

\begin{document}

\DOI{DOI HERE}
\copyrightyear{2021}
\vol{00}
\pubyear{2021}
\access{Advance Access Publication Date: Day Month Year}
\appnotes{Paper}
\copyrightstatement{Published by Oxford University Press on behalf of the Institute of Mathematics and its Applications. All rights reserved.}
\firstpage{1}

%\subtitle{Subject Section}

\title[An EWI-FS for the LogSE]{Optimal error bounds on an exponential wave integrator Fourier spectral method for the logarithmic Schr\"odinger equation}

\author{Weizhu Bao
\address{\orgdiv{Department of Mathematics}, \orgname{National University of Singapore}, \orgaddress{\postcode{119076}, \country{Singapore}}}}
\author{Ying Ma
\address{\orgdiv{Department of Mathematics, School of Mathematics, Statistics and Mechanics}, \orgname{Beijing University of Technology, Beijing}, \orgaddress{\postcode{100124}, \state{Beijing}, \country{China}}}}
\author{Chushan Wang*
\address{\orgdiv{Department of Mathematics}, \orgname{National University of Singapore}, \orgaddress{\postcode{119076}, \country{Singapore}}}}

\authormark{Author Name et al.}

\corresp[*]{Corresponding author: \href{chushanwang@u.nus.edu}{chushanwang@u.nus.edu}}

\received{Date}{0}{Year}
\revised{Date}{0}{Year}
\accepted{Date}{0}{Year}

%\editor{Associate Editor: Name}

\abstract{We prove a nearly optimal error bound on the exponential wave integrator Fourier spectral (EWI-FS) method for the logarithmic Schr\"odinger equation (LogSE) under the assumption of $H^2$-solution, which is theoretically guaranteed. Subject to a CFL-type time step size restriction $\tau |\ln \tau| \lesssim h^2/|\ln h|$ for obtaining the stability of the numerical scheme affected by the singularity of the logarithmic nonlinearity, an $L^2$-norm error bound of order $O(\tau |\ln \tau|^2 + h^2 |\ln h|)$ is established, where $\tau$ is the time step size and $h$ is the mesh size.  Compared to the error estimates of the LogSE in the literature, our error bound either greatly improves the convergence rate under the same regularity assumptions or significantly weakens the regularity requirement to obtain the same convergence rate. Moreover, our result can be directly applied to the LogSE with low regularity $L^\infty$-potential, which is not allowed in the existing error estimates. Two main ingredients are adopted in the proof: (i) an $H^2$-conditional $L^2$-stability estimate, which is established using the energy method to avoid singularity of the logarithmic nonlinearity, and (ii) mathematical induction with inverse inequalities to control the $H^2$-norm of the numerical solution. Numerical results are reported to confirm our error estimates and demonstrate the necessity of the time step size restriction imposed. We also apply the EWI-FS method to investigate soliton collisions in one dimension and vortex dipole dynamics in two dimensions. }

\keywords{logarithmic Schr\"odinger equation; exponential wave integrator; low regularity potential; Fourier spectral method; error estimate; voterx dipole.}

% \boxedtext{
% \begin{itemize}
% \item Key boxed text here.
% \item Key boxed text here.
% \item Key boxed text here.
% \end{itemize}}

\maketitle

\section{Introduction}
The logarithmic Schr\"odinger equation (LogSE) arises in a model of nonlinear wave mechanics \citep{LogSE1}, and has found various applications in quantum mechanics, quantum optics, transport and diffusion phenomena, open quantum systems, nuclear physics, and Bose-Einstein condensation (see, e.g., \citep{LogSE2,LogSE3,LogSE4}). In these applications, it is often of particular interest to consider wave propagation in random or disorder medium, which introduces additional low regularity potential into the equation \citep{poten1,poten2,poten3}. In this paper, we consider the following LogSE on a bounded domain $\Omega = \Pi_{j=1}^d (a_j, b_j) \subset \R^d \ (d = 1, 2, 3)$ equipped with periodic boundary condition as
\begin{equation}\label{LogSE}
	\left\{
	\begin{aligned}
		&i \partial_t \psi = -\Delta \psi + V(\vx) \psi + \lambda \ln(|\psi|^2) \psi, && \vx \in \Omega, \quad t>0, \\
		&\psi(\vx, 0) = \psi_0(\vx), && \vx \in \overline{\Omega}, 
	\end{aligned}
	\right.
\end{equation}
where $t \geq 0$ is time, $\vx \in \Omega$ is the spatial coordinate with $\vx = x$ when $d=1$, and $ \psi:=\psi(\vx, t) \in \C $ is the wave function or order parameter. Here, $V \in L^\infty(\Omega)$ is a real-valued (low regularity) potential and $\lambda \in \R$ is a given constant characterizing the nonlinear interaction. The LogSE \cref{LogSE}  conserves the mass
\begin{equation}\label{eq:M}
	M(\psi(\cdot, t)) := \int_\Omega |\psi(\vx, t)|^2 \rmd \vx \equiv M(\psi_0), \quad t \geq 0, 
\end{equation}
and the energy 
\begin{align}\label{eq:E}
	E(\psi(\cdot, t)) 
	&:= \int_\Omega \left[|\nabla \psi(\vx, t)|^2 + V(\vx)|\psi(\vx, t)|^2 + F(|\psi(\vx, t)|^2) \right] \rmd \vx \notag \\
	&\equiv E(\psi_0), \quad t\geq0,  
\end{align}
where $F(\rho) := \lambda \int_0^\rho \ln(r) \rmd r = \lambda (\rho \ln(\rho) - \rho) $ for $\rho \geq 0$. 
%{\color{blue} Noting \cref{eq:M}, the following quantity (which may also be regarded as the energy) is also conserved:}
%\begin{equation*}
%	I(\psi(\cdot, t)) := \int_\Omega \left[|\nabla \psi(\vx, t)|^2 + V(\vx)|\psi(\vx, t)|^2 + \lambda|\psi(\vx, t)|^2 \ln(|\psi(\vx, t)|^2)\right] \rmd \vx. 
%\end{equation*}

A closely related model to the LogSE is the nonlinear Schrödinger equation (NLSE) with power-type nonlinearity: 
\begin{equation}\label{NLSE}
	i \partial_t \psi = -\Delta \psi + V(\vx) \psi + \lambda |\psi|^{2\sigma} \psi, \quad \vx \in \Omega, \quad t>0, 
\end{equation}
where $\lambda \in \R$ and $\sigma \in \R^{+}$, which has been extensively studied \citep{cazenave2003,NLS}. Although the LogSE can be viewed as the limit of the NLSE \cref{NLSE} as $\sigma \rightarrow 0$ (see \cite{LogRemi2022} and \cite{log2019arma} for a detailed discussion of such convergence), a distinctive feature of the LogSE compared to the NLSE \cref{NLSE} is that the nonlinearity $z \rightarrow \ln(|z|^2)z$ is not locally Lipschitz continuous due to the singularity of the logarithm at the origin. Such singularity results in challenges in the analytical study of the LogSE, making even the Cauchy problem fundamentally different from that of the NLSE. The (global) well-posedness of the Cauchy problem of the LogSE \cref{LogSE} has been extensively studied since the first work \citep{log1980}, where weak $H^1$- and $H^2$-solutions are constructed for $\lambda<0$ using compactness arguments. Subsequent works \citep{LogRemi2022,sinum2019} extend these results to cover both $\lambda<0$ and $\lambda > 0$. More recently, strong solutions have been constructed without using compactness arguments \citep{Logstrong2024,Logstrong2018,logcarles2024}. In particular, it remains open whether higher-than-$H^2$-regularity (e.g., $H^3$) can be propagated by the LogSE \cref{LogSE} even without potential due to the singularity of the logarithmic nonlinearity. 

The logarithmic nonlinearity also gives rise to several unique dynamical properties of the LogSE. First, it is shown that when $\Omega = \R^d$ and $V(\vx) \equiv 0$ in \cref{LogSE}, if the initial data is Gaussian, the solution will remain Gaussian for all time, and the dynamics reduces to an ODE system for the parameters of the Gaussian \citep{sinum2019}. Under the same setting for $\Omega$ and $V$, though the nonlinearity $\lambda \ln|\psi|^2$ has no definite sign no matter $\lambda>0$ or $\lambda<0$, it is proved that no solutions are dispersive when $\lambda < 0$, whereas all solutions disperse at a faster rate than the solution of the NLSE \cref{NLSE} when $\lambda > 0$ \citep{Log_Remi}. Another unusual property of the LogSE is that the dynamics are invariant under change of the size of initial data as the equation remains valid under $\psi_0 \rightarrow \kappa \psi_0 $ and $\psi \rightarrow \kappa \psi e^{-i t \lambda \ln|\kappa|^2}$ for $\kappa \in \C$. Additionally, the LogSE satisfies a tensorization property \citep{LogRemi2022}, which is the motivation for introducing this model \citep{LogSE1}.

Along the numerical side, many accurate and efficient numerical methods have been proposed and analyzed for the NLSE \cref{NLSE} with smooth potential and cubic nonlinearity (i.e., $\sigma=1$). These include the finite difference time domain (FDTD) method \citep{FD,bao2013,Ant,henning2017}, the time-splitting method \citep{BBD,lubich2008,schratz2016,Ant,splitting_low_reg,su2022,bao2023_semi_smooth,splittinglowregfull}, the exponential wave integrator (EWI) \citep{ExpInt,SymEWI,bao2023_EWI}, and the low regularity integrator (LRI) \citep{LRI,tree1,tree2,tree3} designed for the NLSE with extremely rough initial data. Most of these methods can be applied to solving the LogSE with/without proper regularization of the logarithmic nonlinearity, such as FDTD methods \citep{sinum2019,logFD22,logFD23,log2024} and time-splitting methods \citep{bao2019,bao2022,zhang2024}. However, the error estimates of these methods for the LogSE \cref{LogSE} is a subtle and challenging question due to the singularity of the nonlinearity. For the FDTD method, first-order convergence in $L^2$-norm is obtained under the assumption, among others, $\partial_{tt} \psi \in L^2(\Omega)$ \citep{log2024,logFD23}, which generally requires $H^4$-solution by the equation. However, such an assumption is already beyond the well-posedness theory of the LogSE, and cannot be satisfied in general, especially when there is low regularity potential. For the time-splitting methods, half-order convergence (up to a logarithmic factor) is established when $V \in H^1(\Omega) \cap L^\infty(\Omega)$ under the assumption of $H^2$-solution of the LogSE which is theoretically guaranteed \citep{bao2019}. However, this result does not allow purely $L^\infty$-potential, and the convergence order reduction from first-order to half-order is not observed in the numerical experiments. Hence, it remains unclear whether first-order temporal convergence can be achieved for any time discretizations under the assumption of $L^\infty$-potential and $H^2$-solution of the LogSE. 

Very recently, for the NLSE \cref{NLSE} with $L^\infty$-potential and $C^1$-nonlinearity (satisfied for any $\sigma >0$), optimal first-order $L^2$-norm error bounds are established under the assumption of $H^2$-solutions by the same authors for both time-splitting methods \citep{bao2023_improved} and EWIs \citep{bao2023_EWI,bao2023_sEWI}. Considering that the LogSE \cref{LogSE} can be understood as the limit of \cref{NLSE} as $\sigma \rightarrow 0$, it is natural to expect the same optimal error bounds (up to some logarithmic factor) to hold for the LogSE. However, {due to the singularity of the nonlinearity again}, the error estimates in \citep{bao2023_improved,bao2023_EWI,bao2023_sEWI} cannot be directly applied and new analysis techniques are needed. In fact, as we shall show in the current work, this limit cannot be trivially taken and some CFL-type time step size restriction is needed to compensate for the singularity of the nonlinearity. 

In this work, we introduce an exponential wave integrator Fourier spectral (EWI-FS) method to solve the LogSE \cref{LogSE}. The use of the EWI-FS method is motivated by existing works on the NLSE with low regularity potential and nonlinearity \citep{bao2023_semi_smooth,bao2023_EWI,bao2023_improved,bao2023_sEWI,bao2024,lin2024}, where it is shown that (i) the EWI is advantageous over time-splitting methods under low regularity potential and nonlinearity, and (ii) the Fourier spectral method is able to achieve optimal spatial convergence consistent with the regularity of the exact solution. In fact, the optimal spatial convergence is also crucial in obtaining the temporal convergence order. As a result, for the EWI-FS method, we prove a nearly optimal error bound of $O(\tau |\ln \tau|^{2} + h^2 |\ln h|)$ with $\tau$ being the time step size and $h$ being the mesh size, under the assumption of $L^\infty$-potential and $H^2$-solution of the LogSE \cref{LogSE}, and subject to a CFL-type time step size restriction $\tau |\ln \tau| \lesssim h^2/|\ln h|$ (see \cref{thm:main}). This time step size restriction is necessary in the practical implementation of the EWI-FS method as justified by the numerical results, and this is purely due to the singularity of the nonlinearity instead of the low regularity potential. To our best knowledge, it is the first work that establishes (nearly) first-order temporal convergence {and second-order spatial convergence} for the LogSE under the assumption of $H^2$-solution. Compared to the results for FDTD methods, our error bound significantly relaxes the regularity requirement on both the potential and exact solution for first-order temporal convergence. Compared to the results for time-splitting methods, our error bound improves the convergence order and weakens the regularity requirement on potential simultaneously. 

The remainder of this paper is structured as follows. In \cref{sec:2}, we introduce the first-order EWI and its spatial discretization by the Fourier spectral method, and state our main error estimate result. The proof of the main result is presented in \cref{sec:3}. Extensive numerical results are provided to validate our error estimates and to study the dynamics of the LogSE in \cref{sec:num}. Finally, some concluding remarks and directions for future research are provided in \cref{sec:5}. Throughout the paper, standard notations of Sobolev spaces and corresponding norms are adopted. We denote by $ C $ a generic positive constant independent of the time step size $ \tau $ and the mesh size $ h $, and by $ C(\alpha) $ a generic positive constant depending on the parameter $ \alpha $. The notation $ A \lesssim B $ is used to represent that there exists a generic constant $ C>0 $, such that $ |A| \leq CB $.

\section{Exponential wave integrator Fourier spectral method and main results}
\label{sec:2}
In this section, we introduce the exponential wave integrator Fourier spectral method (EWI-FS) to solve the LogSE \cref{LogSE}, and present our main results. Here, we directly approximate the LogSE \cref{LogSE} without regularizing the logarithmic nonlinearity as in \cite{logFD23}, \cite{log2024} and \cite{zhang2024}. For simplicity of the presentation, we only present the numerical scheme in one dimension (1D) with $\Omega = (a, b)$. Generalizations to two dimensions (2D) and three dimensions (3D) are straightforward. We shall frequently use the periodic Sobolev spaces defined as
\begin{equation}
	H^m_\text{per}(\Omega) = \{\phi \in H^m(\Omega): \phi^{(k)}(a) = \phi^{(k)}(b), \  k = 0, \cdots, m-1 \}, \quad m \in \mathbb{Z}^+. 
\end{equation}

\subsection{EWI-FS method}	
In the following, we present the EWI-FS method. We first discretize the LogSE \cref{LogSE} in space by the Fourier spectral method to obtain a coupled system of ODEs. Then we use a first-order EWI to further discretize the ODE system in time. To simplify the notation, we denote $ \psi(\cdot, t)$ by $\psi(t)$ and define an operator $B$ as
\begin{equation}
	B(\phi) = V \phi + \lambda \ln(|\phi|^2)\phi, \quad \phi \in L^2(\Omega). 
\end{equation}
Choose a mesh size $h = (b-a)/N$ with $N$ being a positive even integer and denote the grid points as
\begin{equation*}
	x_j = a+jh, \quad j \in \mathcal{T}_N^0 := \{0, \cdots, N\}. 
\end{equation*}
Define the index set of frequency as
\begin{equation}
	\mathcal{T}_N = \left\{ - \frac{N}{2}, \cdots, \frac{N}{2} - 1 \right\}, 
\end{equation}
and denote
\begin{equation}
	X_N = \text{span}\{e^{i \mu_l (x-a)}: l \in \mathcal{T}_N\}, \quad \mu_l = \frac{2 \pi l}{b-a}. 
	%		& Y_N = \{(v_0, \cdots, v_N)^T \in \mathbb{C}^{N+1}: v_0 = v_N\}. 
\end{equation}
Let $P_N:L^2(\Omega) \rightarrow X_N$ be the $L^2$ projection on $X_N$ defined for any $\phi \in L^2(\Omega)$ as
\begin{equation}
	(P_N \phi)(x) = \sum_{l \in \mathcal{T}_N} \widehat{\phi}_l e^{i \mu_l(x-a)}, \quad x \in \Omega, 
\end{equation}
where $\widehat{\phi}$ is the Fourier transform of $\phi$ defined by
\begin{equation}
	\widehat{\phi}_l = \frac{1}{b-a} \int_a^b \phi(x) e^{-i \mu_l(x-a)} \rmd x, \quad l \in \Z. 
\end{equation}
The Fourier spectral discretization of the LogSE \cref{LogSE} reads: Find
\begin{equation}
	\psi_N = \psi_N(t) = \psi_N(x, t) = \sum_{l \in \mathcal{T}_N} \widehat{(\psi_N)}_l(t) e^{i \mu_l(x-a)} \in X_N, \quad t \geq 0, 
\end{equation}
such that $\psi_N(0) = P_N \psi_0$ and 
\begin{equation}\label{eq:PNLogSE-eq}
	i \partial_t \psi_N(t) = -\Delta \psi_N(t) + P_N B(\psi_N(t)), \quad  t>0. 
\end{equation}
By the orthogonality, we then obtain the equations of the Fourier coefficients as
\begin{equation}\label{eq:PNLogSE}
	\left\{
	\begin{aligned}
		&i \frac{\rmd}{\rmd t} \widehat{(\psi_N)}_l(t) = \mu_l^2 \widehat{(\psi_N)}_l(t) + \widehat{(B(\psi_N))}_l, \quad l \in \mathcal{T}_N, \quad  t>0, \\
		&\widehat{(\psi_N)}_l(0) = \widehat{(\psi_0)}_l, \quad l \in \mathcal{T}_N.  
	\end{aligned}
	\right.
\end{equation}
Here, with the understanding that $z \ln|z|^2 = 0$ when $z=0$, we note that for $\varsigma > 0$ arbitrarily small, 
\begin{equation}
	|z \ln|z|^2| \lesssim |z|^{1+\varsigma} + |z|^{1-\varsigma}, \quad z \in \C, 
\end{equation}
which implies $B(\phi) \in L^2(\Omega)$ for any $\phi \in X_N$ and thus its $L^2$-projection and Fourier transform in \cref{eq:PNLogSE-eq,eq:PNLogSE} are well-defined. 
Then we further discretize \cref{eq:PNLogSE} in time by a first-order EWI. Choose a time step size $\tau>0$ and denote time steps as $t_n = n \tau$ for $n=0, 1, \cdots$. By the Duhamel's formula, the exact solution of \cref{eq:PNLogSE} satisfies
\begin{equation}\label{eq:Duhamel}
	\widehat{(\psi_N)}_l(t_{n+1}) = e^{-i\tau\mu_l^2} \widehat{(\psi_N)}_l(t_n) - i \int_0^\tau e^{-i(\tau - s)\mu_l^2} B_l^n(s) \rmd s, \quad l \in \mathcal{T}_N, 
\end{equation}
where $B_l^n(s) = \widehat{(W(s))}_l$ with $W(s) = B(\psi_N(t_n + s))$ for $0 \leq s \leq \tau$. 
Adopting the approximation $B_l^n(s) \approx B_l^n(0)$ in the integral above and integrating out $e^{-i(\tau-s)\mu_l^2}$ exactly, we obtain
\begin{align}
	\widehat{(\psi_N)}_l(t_{n+1})
	&\approx e^{-i\tau\mu_l^2} \widehat{(\psi_N)}_l(t_n) - i \int_0^\tau e^{-i(\tau - s)\mu_l^2} \rmd s B^n_l(0) \notag \\
	&= e^{-i\tau\mu_l^2} \widehat{(\psi_N)}_l(t_n) - i \tau \vphi_1(-i \tau \mu_l^2) B^n_l(0), \quad l \in \mathcal{T}_N,  
\end{align}
where $\vphi_1(z) = (e^z-1)/z$ for $z \in \C$. This naturally leads to the following numerical scheme: Let $\psihn{n}(\cdot) \in X_N$ be the numerical approximation to $\psi(\cdot, t_n)$ for $n \geq 0$, then the EWI-FS method reads
\begin{equation}\label{eq:EWI-FS_scheme}
	\begin{aligned}
		&\widehat{(\psihn{n+1})}_l = e^{-i\tau\mu_l^2} \widehat{(\psihn{n})}_l - i \tau \vphi_1(- i \tau \mu_l^2) \widehat{B(\psihn{n})}_l, \quad l \in \mathcal{T}_N, \quad n \geq 0, \\
		&\widehat{(\psihn{0})}_l = \widehat{(\psi_0)}_l, \quad l \in \mathcal{T}_N. 
	\end{aligned}
\end{equation}
Rewriting \cref{eq:EWI-FS_scheme} in the physical space, we obtain that $\psihn{n} \in X_N$ satisfies
\begin{equation}\label{eq:EWI-FS}
	\begin{aligned}
		&\psihn{n+1} = e^{i\tau\Delta} \psihn{n} - i \tau \vphi_1(i \tau \Delta) P_N B(\psihn{n}), \quad n \geq 0, \\
		&\psihn{0} = P_N \psi_0, 
	\end{aligned}
\end{equation}
where $\vphi_1(i\tau\Delta)$ is defined through its action in the Fourier space (see \cite{bao2023_EWI}). 

\begin{remark}\label{rem:bc}
	When $\Omega$ is equipped with the homogeneous Dirichlet boundary condition or Neumann boundary condition, an exponential wave integrator sine spectral method or cosine spectral method similar to \cref{eq:EWI-FS_scheme} is straightforward \citep{Ant}, and the main result can be directly generalized to both cases. 
\end{remark}

\subsection{Main results}
In this subsection, we state our main error estimate results for the EWI-FS method \cref{eq:EWI-FS_scheme} applied to the LogSE \cref{LogSE}. 
%	Let $0<T<T_\text{max}$ with $T_\text{max}$ being the maximum existing time of the exact solution of the LogSE \cref{LogSE}. 
According to the known $H^2$ well-posedness of the LogSE \cref{LogSE} \citep{Log_Remi,sinum2019,kato1987,Logstrong2024}, we make the following assumptions on the exact solution: For some $T>0$, 
\begin{equation}\label{eq:regularity}
	\psi \in C([0, T]; H^2_\text{per}(\Omega)) \cap C^1([0, T]; L^2(\Omega)). 
\end{equation}
In fact, the $H^2$ well-posedness is proved for the LogSE without potential, i.e., \cref{LogSE} with $V(\vx) \equiv 0$ (see, e.g., \citet[Theorem 1.2]{Logstrong2024} and \citet[Theorem 2.2]{sinum2019}) and for the NLSE with $L^\infty$-potential and power-type nonlinearity (see, e.g., \citet[Theorem 2]{kato1987}). Moreover, it is the highest regularity that can be theoretically guaranteed for the LogSE: In the absence of the potential (i.e., $V(\vx) \equiv 0$), it remains open if higher regularity (e.g., $H^3$) can be propagated even locally in time \citep{LogRemi2022,logremi2024}. Hence, it is crucial to establish error estimates under the assumption \cref{eq:regularity} of $H^2$-solution of the LogSE. 

We define a constant
\begin{equation}\label{eq:M_def}
	M: = \max\{\| \psi \|_{L^\infty([0, T]; H^2)}, \| \partial_t \psi \|_{L^\infty([0, T]; L^2)}, \| \psi \|_{L^\infty([0, T]; L^\infty)}, \| V \|_{L^\infty}\}. 
\end{equation}

\begin{theorem}\label{thm:main}
	Under the assumptions $V \in L^\infty(\Omega)$ and $\psi \in C([0, T]; H^2_\text{\rm per}(\Omega)) \cap C^1([0, T]; L^2(\Omega))$, there exists $0<\tau_0, \  h_0<e^{-1}$ sufficiently small depending on $M$ and $T$ such that when $0<\tau<\tau_0$, $0<h<h_0$ and {$\tau|\ln \tau| \leq \tilde C h^{2}/|\ln h|$ for some $\tilde C>0$}, we have
	\begin{align*}
		&\| \psi(t_n) - \psihn{n} \|_{L^2} \lesssim \tau |\ln \tau |^{2} + h^2 |\ln h |, \quad  \| \psihn{n} \|_{H^2} \lesssim |\ln h|, \\
		&\| \psi(t_n) - \psihn{n} \|_{H^1} \lesssim \sqrt{\tau} |\ln \tau | + h |\ln h |,
		\quad 0 \leq n \leq T/\tau. 
	\end{align*}
\end{theorem}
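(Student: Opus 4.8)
The plan is to estimate the fully discrete error $e^n := P_N \psi(t_n) - \psihn{n} \in X_N$, treating the spatial projection error $(I-P_N)\psi(t_n)$ separately through the standard spectral bounds $\|(I-P_N)\psi(t_n)\|_{L^2} \lesssim h^2 \|\psi(t_n)\|_{H^2}$ and $\|(I-P_N)\psi(t_n)\|_{H^1} \lesssim h \|\psi(t_n)\|_{H^2}$. Applying $P_N$ to \cref{LogSE} and using Duhamel's formula on $[t_n, t_{n+1}]$, together with the identity $\tau \vphi_1(i\tau\Delta) g = \int_0^\tau e^{i(\tau-s)\Delta} g \, \rmd s$ for $s$-independent $g$, I would derive the one-step recursion
\begin{align*}
	e^{n+1} &= e^{i\tau\Delta} e^n - i \int_0^\tau e^{i(\tau-s)\Delta} P_N \left[ B(\psi(t_n)) - B(\psihn{n}) \right] \rmd s \\
	&\quad - i \int_0^\tau e^{i(\tau-s)\Delta} P_N \left[ B(\psi(t_n+s)) - B(\psi(t_n)) \right] \rmd s,
\end{align*}
in which the last integral is the local consistency error and the middle integral drives the stability analysis.

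For the consistency error I would deliberately avoid differentiating $B(\psi(t))$ in time, since $\partial_t \ln|\psi|^2 = 2\,\mathrm{Re}(\bar\psi \partial_t \psi)/|\psi|^2$ carries the forbidden $1/|\psi|^2$ singularity and $\partial_{tt}\psi$ is not available under \cref{eq:regularity}. Instead I would exploit the log-type (log-Lipschitz) modulus of continuity of $z \mapsto z\ln|z|^2$ together with the $L^2$ time-regularity $\|\psi(t_n+s) - \psi(t_n)\|_{L^2} \le \int_0^s \|\partial_t \psi(t_n+s')\|_{L^2} \, \rmd s' \lesssim sM$ that follows directly from the assumed $C^1([0,T];L^2)$ regularity. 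This produces a log-corrected second-order local error, which after discrete summation yields the temporal rate $\tau|\ln\tau|^2$, the logarithmic factors originating solely from the modulus of continuity of the logarithmic nonlinearity.

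The heart of the argument is an $H^2$-conditional $L^2$-stability estimate for the middle (nonlinear) term via the energy method. Computing $\|e^{n+1}\|_{L^2}^2$ and using the Schrödinger structure (the factor $-i$ converts a real part into an imaginary part), the singular contribution of the logarithm appears precisely in the form $\mathrm{Im}\big[(\psi(t_n)\ln|\psi(t_n)|^2 - \psihn{n}\ln|\psihn{n}|^2)\,\overline{(\psi(t_n)-\psihn{n})}\big]$, which by the Cazenave--Haraux inequality $|\mathrm{Im}[(z_1 \ln|z_1|^2 - z_2 \ln|z_2|^2)\overline{(z_1-z_2)}]| \le 2|z_1-z_2|^2$ is bounded by $C \|\psi(t_n) - \psihn{n}\|_{L^2}^2 \lesssim \|e^n\|_{L^2}^2 + h^4 M^2$ with \emph{no} logarithmic blow-up; this is exactly how the energy method sidesteps the singularity. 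The real potential $V \in L^\infty$ contributes nothing to the imaginary part, the propagators $e^{i(\tau-s)\Delta}$ are unitary, and $\vphi_1(i\tau\Delta)$ is uniformly bounded and $O(\tau)$-close to the identity, so the only residual terms are $O(\tau^2 h^{-2})$ corrections arising from replacing $e^{i\tau\Delta}$ and $\vphi_1(i\tau\Delta)$ by the identity on $X_N$. Applying the inequality and estimating the cross terms requires an a priori bound $\|\psihn{n}\|_{L^\infty} \lesssim \|\psihn{n}\|_{H^2}$, which is what makes the estimate conditional on an $H^2$ bound of the numerical solution.

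Finally I would close everything by a mutual induction on $n$ that propagates the $L^2$ error bound and the $H^2$ bound simultaneously. Assuming both hold for indices up to $n$, the inverse inequality $\|\phi\|_{H^2} \le C h^{-2}\|\phi\|_{L^2}$ on $X_N$ gives $\|\psihn{n}\|_{H^2} \le \|P_N \psi(t_n)\|_{H^2} + C h^{-2}\|e^n\|_{L^2} \lesssim M + h^{-2}(\tau|\ln\tau|^2 + h^2|\ln h|)$, and the CFL condition $\tau|\ln\tau| \lesssim h^2/|\ln h|$ forces the right-hand side to be $\lesssim |\ln h|$, advancing the $H^2$ bound to step $n+1$; feeding this $H^2$ (hence $L^\infty$) bound into the conditional stability estimate and applying discrete Grönwall advances the $L^2$ bound. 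I expect the main obstacle to be precisely this interlocking: the singular nonlinearity renders neither estimate available unconditionally, the $O(\tau^2 h^{-2})$ stability corrections and the $h^{-2}$ loss in the inverse inequality must both be balanced against the $L^2$ error, and only their simultaneous propagation under the sharp CFL restriction $\tau|\ln\tau| \lesssim h^2/|\ln h|$ closes the loop. The $H^1$ estimate then follows by interpolation, $\|e^n\|_{H^1} \lesssim \|e^n\|_{L^2}^{1/2}\|e^n\|_{H^2}^{1/2}$, combined with the $H^1$ projection error $\lesssim h|\ln h|$.
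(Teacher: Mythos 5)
Your proposal follows essentially the same route as the paper: the same error decomposition $e^n = P_N\psi(t_n) - \psihn{n}$, a local consistency estimate via the log-Lipschitz regularization of $z\mapsto z\ln|z|^2$ (avoiding $\partial_{tt}\psi$), an $H^2$-conditional $L^2$-stability estimate built on the Cazenave--Haraux inequality \cref{lem:lip}, and a simultaneous induction on the $L^2$ error and the $H^2$ bound $\|\psihn{n}\|_{H^2}\lesssim|\ln h|$ closed by inverse inequalities under the CFL restriction. Two points deserve correction, however. First, your claim that the residual in the stability step is $O(\tau^2 h^{-2})$ (from replacing the propagators by the identity via the inverse inequality) is too lossy: accumulated over $T/\tau$ steps this gives $O(\tau|\ln\tau| h^{-2})$, which under $\tau|\ln\tau|\le \tilde C h^2/|\ln h|$ is only $O(1/|\ln h|)$ and does not yield the claimed rate. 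The paper instead bounds $\|(e^{it\Delta}-I)v_0\|_{L^2}\le t\|\Delta v_0\|_{L^2}$ using the \emph{inducted} $H^2$ bound, so the per-step residual is $O(\tau^2|\ln\tau|\,\|\psihn{n}\|_{H^2}) = O(\tau^2|\ln\tau||\ln h|)$; this is the accounting your own induction paragraph implicitly requires, and you must use it rather than $h^{-2}$. Second, the Cazenave--Haraux inequality cannot be applied directly to the frozen difference $B(\psi(t_n))-B(\psihn{n})$ paired with the \emph{evolved} error over $(t_n,t_{n+1}]$, since \cref{lem:lip} needs the pairing $\overline{z_1-z_2}$ to match the arguments of $g$ exactly; the paper inserts the intermediate terms $R_j = P_N B(\cdot(t)) - P_N B(\cdot(0))$ so that the singular part is $\mathrm{Im}\langle B(v(t))-B(w(t)), e(t)\rangle$ at equal times, and the correction terms $R_1,R_2$ are precisely where the conditional $H^2$ dependence enters. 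Your sketch glosses over this decomposition, which is the actual mechanism of the conditional stability. (Minor: your interpolation route for the $H^1$ bound gives $\sqrt{\tau}|\ln\tau|\,|\ln h|^{1/2}$ rather than the sharper $\sqrt{\tau}|\ln\tau|$ obtained from the direct inverse inequality $\|e^n\|_{H^1}\lesssim h^{-1}\|e^n\|_{L^2}$ combined with the CFL condition.)
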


\begin{remark}\label{rem:highregularity}
	If $\psi \in C([0, T]; H^m_\text{\rm per}(\Omega))$ for some $m>2$ in \cref{thm:main}, {under a slightly relaxed time step size restriction $\tau|\ln \tau| \leq \tilde C h^2$ with $\tilde C$ sufficiently small}, the error bounds can be improved to
	\begin{align*}
		&\| \psi(t_n) - \psihn{n} \|_{L^2} \lesssim \tau |\ln \tau | + h^m |\ln h|, \quad  \| \psihn{n} \|_{H^2} \leq 1+M, \\
		&\| \psi(t_n) - \psihn{n} \|_{H^1} \lesssim \sqrt{\tau |\ln \tau |} + h^{m-1} |\ln h|,
		\quad 0 \leq n \leq T/\tau. 
	\end{align*}
	Since the above results can be obtained in a manner analogous to but simpler than \cref{thm:main}, we shall omit the proof.  
\end{remark}

%	\begin{remark}\label{rem:highregularity}
	%		If, in addition, $\psi \in C([0, T]; H^m(\Omega))$ for some real $m>2$ in \cref{thm:main}, then the time step size restriction can be relaxed to $\tau|\ln \tau| \leq h^2/|\ln h|$, and the error estimates can be improved to
	%		\begin{align*}
		%			&\| \psi(t_n) - \psihn{n} \|_{L^2} \lesssim \tau |\ln \tau | + h^2, \quad  \| \psihn{n} \|_{H^2} \leq 1+M, \\
		%			&\| \psi(t_n) - \psihn{n} \|_{H^1} \lesssim \sqrt{\tau |\ln \tau |} + h,
		%			\quad 0 \leq n \leq \frac{T}{\tau}. 
		%		\end{align*}
	%		Since the above results can be obtained in a manner analogous to but simpler than \cref{thm:main}, we shall omit the proof.  
	%	\end{remark}

The time step size restrictions in \cref{thm:main,rem:highregularity} are essentially the CFL condition when ignoring the logarithmic factors. This time step size restriction is natural in terms of the balance between temporal and spatial errors, and, notably, it can be observed in our numerical experiments, indicating its necessity (see \cref{sec:num}). This should be compared to the results in \citep{bao2023_EWI,bao2023_sEWI} for the EWI-FS method applied to the NLSE with $L^\infty$-potential and power-type nonlinearity, where optimal error bounds can be obtained without any CFL-type time step size restriction. Hence, although the logarithmic nonlinearity can be viewed as the limiting case of the power-type nonlinearity, 
%	\begin{equation}
	%		\lim_{\sigma \rightarrow 0^+} \frac{\rho^\sigma - 1}{\sigma} = \ln (\rho), \quad \rho > 0, 
	%	\end{equation}
the numerical method may behave significantly differently in these two cases. 
%{\color{blue} In addition, the time step size restriction in \cref{thm:main} cannot be relaxed to $ \tau|\ln \tau| \leq C h^{2}/|\ln h| $ with $C > 1$ as can be seen from \cref{eq:induction_H2} in the proof.}

%	\subsection{The sEWI}
%	The symmetric exponential wave integrator Fourier spectral method (sEWI-FS) method reads
%	\begin{equation}\label{eq:sEWI-FS}
	%		\begin{aligned}
		%			&\psihn{n+1} = e^{2i\tau\Delta} \psihn{n-1} - 2i \tau e^{i \tau \Delta} \vphi_s(\tau \Delta) P_N \left( f(|\psihn{n}|^2)\psihn{n} \right), \quad n \geq 1, \\
		%			&\psihn{1} = P_N \psi(\cdot, t_1), \quad \psihn{0} = P_N \psi_0. 
		%		\end{aligned}
	%	\end{equation}
%	For simplicity of the error estimates, we assume that $\psihn{1}$ is exactly given. In practice, one can use \cref{eq:EWI-FS} to compute the first step. 

\section{Error estimates}\label{sec:3}
In this section, we prove the main result \cref{thm:main}. 

\subsection{Estimates for the nonlinearity}
We first introduce some estimates for the logarithmic nonlinearity. For $0 < \vep < 1$, we define $g:\C \rightarrow \C$ and $g_\vep:\C \rightarrow \C$ as
\begin{equation}
	g(z) = z\ln(|z|^2), \quad  g_\vep(z) = z\ln(|z| + \vep)^2, \qquad z \in \C. 
\end{equation}
%	\begin{equation}
	%		f_\vep(\rho) = \lambda \ln(\sqrt{\rho}+\vep)^2, \qquad \rho \geq 0. 
	%	\end{equation}
In fact, $g_\vep$ can be regarded as a regularization of $g$, which has better regularity, and is, in particular, Lipschitz continuous \citep{sinum2019,zhang2024}. 
For any $\phi \in L^2(\Omega)$, we define, $g(\phi) (x) := g(\phi(x))$ and $g_\vep(\phi) (x) := g_\vep(\phi(x))$ for $x \in \Omega$. By some elementary calculation, we have, for $0 < \vep < 1$, 
\begin{equation}\label{eq:ggep}
	|g(z) - g_\vep(z)| \leq 2\vep, \quad z \in \C. 
\end{equation}
Using $g_\vep$ as an intermediary and leveraging \cref{eq:ggep}, we have, for any $0< \vep < 1$,  
\begin{equation}\label{lem:diff_g}
	|g(z_1) - g(z_2)| \leq 4\vep + 2(1+L_\vep(M_0))|z_1 - z_2|, \quad z_1, z_2 \in \C,  
\end{equation}
where $M_0 = \max\{|z_1|, |z_2|\}$ and $L_\vep(s) = \max\{|\ln(\vep)|, \ln(1+s)\}$ for $s \geq 0$. The proof of \cref{eq:ggep,lem:diff_g} can be found in \citet[Lemma 3.1]{zhang2024} and is omitted here. 

Then we recall the following algebraic property of the nonlinearity $g$ first discovered in \cite{log1980}: 
\begin{equation}\label{lem:lip}
	|\text{\rm Im}[(g(z_1) - g(z_2))(\overline{z_1 - z_2})]| \leq 2 |z_1 - z_2|^2, \quad z_1, z_2 \in \C. 
\end{equation}
The estimate \cref{lem:lip} is crucial in overcoming the singularity of the logarithmic nonlinearity, and have been successfully used in establishing the well-posedness of the LogSE \citep{log1980,sinum2019,Logstrong2024} and in the error estimates of time-splitting methods \citep{bao2019,bao2022,zhang2024} and FDTD methods \citep{logFD23,log2024} for the LogSE. 

\subsection{Local truncation error}
We define the numerical flow $\Phi_h^t: X_N \rightarrow X_N$ associated with the EWI-FS method \cref{eq:EWI-FS} as
\begin{equation}\label{eq:Phi_def}
	\Phi_h^t(\phi) = e^{i t \Delta} \phi - i t \vphi_1(it\Delta) P_N B(\phi), \quad \phi \in X_N, \quad t \geq 0.   
\end{equation}
Define the local truncation error 
\begin{equation}\label{eq:mcalE_def}
	\mathcal{E}^n = P_N \psi(t_{n+1}) - \Phi_h^\tau(P_N \psi(t_n)), \quad 0 \leq n \leq T/\tau-1. 
\end{equation}
Then we have the following estimate of the local truncation error. 
\begin{proposition}[Local truncation error]\label{prop:LTE}
	For $0 < \tau < 1 $ and $ 0 < h < 1$, we have
	\begin{equation*}
		\| \mathcal{E}^n \|_{L^2} \lesssim (1+ |\ln(\tau)|) \tau^2 + (1+|\ln(h)|) \tau h^2, \quad  0 \leq n \leq T/\tau -1, 
	\end{equation*}
	where the constant depends on $M$. 
\end{proposition}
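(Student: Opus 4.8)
The plan is to begin from the variation-of-constants representation and compare it term by term with the numerical flow $\Phi_h^\tau$. Since the exact solution obeys $i\partial_t\psi = -\Delta\psi + B(\psi)$ and $P_N$ commutes with $\Delta$, the projected solution $P_N\psi$ satisfies the Duhamel formula
\[
	P_N\psi(t_{n+1}) = e^{i\tau\Delta} P_N\psi(t_n) - i\int_0^\tau e^{i(\tau-s)\Delta} P_N B(\psi(t_n+s))\,\rmd s,
\]
by the same computation that leads to \cref{eq:Duhamel}. Using the operator identity $\tau\vphi_1(i\tau\Delta) = \int_0^\tau e^{i(\tau-s)\Delta}\,\rmd s$ on $X_N$ (immediate in Fourier space from $\vphi_1(z) = (e^z-1)/z$), I rewrite the numerical flow \cref{eq:Phi_def} with the same integral kernel; the two linear terms then cancel, leaving
\[
	\mathcal{E}^n = -i\int_0^\tau e^{i(\tau-s)\Delta} P_N\left[ B(\psi(t_n+s)) - B(P_N\psi(t_n)) \right]\rmd s.
\]
Because $e^{i(\tau-s)\Delta}$ is unitary on $L^2$ and $\| P_N \|_{L^2 \to L^2} \leq 1$, the whole estimate reduces to controlling $\int_0^\tau \| B(\psi(t_n+s)) - B(P_N\psi(t_n)) \|_{L^2}\,\rmd s$.

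Next I would split $B(\phi) = V\phi + \lambda g(\phi)$ and treat the two pieces separately. The potential piece is harmless: $\| V[\psi(t_n+s) - P_N\psi(t_n)] \|_{L^2} \leq M \| \psi(t_n+s) - P_N\psi(t_n) \|_{L^2}$, and the triangle inequality together with the temporal regularity $\| \psi(t_n+s) - \psi(t_n) \|_{L^2} \leq \int_{t_n}^{t_n+s}\| \partial_t\psi(r) \|_{L^2}\,\rmd r \lesssim \tau$ and the standard Fourier projection bound $\| \psi(t_n) - P_N\psi(t_n) \|_{L^2} \lesssim h^2 \| \psi(t_n) \|_{H^2} \lesssim h^2$ shows that this contributes only $O(\tau^2 + \tau h^2)$ after integrating in $s$, which is dominated by the claimed bound.

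The heart of the matter is the logarithmic piece $g(\psi(t_n+s)) - g(P_N\psi(t_n))$, where the non-Lipschitz character of $g$ at the origin forbids a naive Lipschitz estimate and forces the use of \cref{lem:diff_g}. I would split it as $[g(\psi(t_n+s)) - g(\psi(t_n))] + [g(\psi(t_n)) - g(P_N\psi(t_n))]$ and apply \cref{lem:diff_g} pointwise to each difference with a tailored regularization parameter, namely $\vep \sim \tau$ for the temporal difference and $\vep \sim h^2$ for the spatial one. To ensure that the constant $L_\vep(M_0)$ there collapses to $|\ln\vep|$ rather than the $\ln(1+M_0)$ branch, I need a uniform $L^\infty$ bound on both arguments: $\| \psi \|_{L^\infty([0,T];L^\infty)} \leq M$ holds by assumption, and $\| P_N\psi(t_n) \|_{L^\infty} \lesssim \| P_N\psi(t_n) \|_{H^2} \leq \| \psi(t_n) \|_{H^2} \leq M$ by the Sobolev embedding $H^2 \hookrightarrow L^\infty$ (valid for $d \leq 3$) together with the $H^2$-stability of $P_N$. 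Taking $\vep$ small depending on $M$ then yields $\| g(\psi(t_n+s)) - g(\psi(t_n)) \|_{L^2} \lesssim (1+|\ln\tau|)\tau$ and $\| g(\psi(t_n)) - g(P_N\psi(t_n)) \|_{L^2} \lesssim (1+|\ln h|)h^2$, where the additive $\vep$-errors from \cref{lem:diff_g} are absorbed into these same terms and the area factor $\sqrt{|\Omega|}$ into the implicit constant.

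Finally I would assemble the pieces: integrating the resulting pointwise-in-$s$ bound over $[0,\tau]$ supplies the extra factor $\tau$, giving $\| \mathcal{E}^n \|_{L^2} \lesssim (1+|\ln\tau|)\tau^2 + (1+|\ln h|)\tau h^2$ with constants depending on $M$, $\lambda$ and $|\Omega|$. I expect the main obstacle to be exactly the balancing in the logarithmic piece: since $g$ fails to be locally Lipschitz at the origin, one cannot bound $\| g(\psi) - g(P_N\psi) \|_{L^2}$ by a constant multiple of $\| \psi - P_N\psi \|_{L^2}$, and instead the additive regularization error of size $\vep$ must be traded against the blow-up $|\ln\vep|$ of the effective Lipschitz constant in \cref{lem:diff_g}; equating $\vep$ with the respective temporal and spatial error scales is precisely what produces the logarithmic factors $|\ln\tau|$ and $|\ln h|$. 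The secondary delicate point is securing the uniform $L^\infty$ control of $P_N\psi(t_n)$ so that the $|\ln\vep|$ branch, rather than $\ln(1+M_0)$, governs the estimate.
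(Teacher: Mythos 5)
Your proposal is correct and follows essentially the same route as the paper: the same Duhamel comparison with the identity $\tau\vphi_1(i\tau\Delta)=\int_0^\tau e^{i(\tau-s)\Delta}\,\rmd s$, the same splitting of $B(\psi(t_n+s))-B(P_N\psi(t_n))$ into a temporal difference and a projection difference, and the same application of \cref{lem:diff_g} with $\vep\sim\tau$ and $\vep\sim h^2$ respectively to trade the additive regularization error against the logarithmic blow-up of the effective Lipschitz constant. Your extra care about the $L^\infty$ bound on $P_N\psi(t_n)$ via $H^2\hookrightarrow L^\infty$ is a point the paper leaves implicit but changes nothing substantive.
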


\begin{proof}
	By Duhamel's principle, we have
	\begin{equation}\label{eq:duhamel}
		\psi(t_{n+1}) = e^{i \tau \Delta} \psi(t_n) - i \int_0^\tau e^{i(\tau - s)\Delta} B(\psi(t_n + s)) \rmd s.
	\end{equation}
	Applying $P_N$ on both sides of \cref{eq:duhamel}, we have
	\begin{equation}\label{eq:psi}
		P_N \psi(t_{n+1}) = e^{i \tau \Delta} P_N \psi(t_n) - i \int_0^\tau e^{i(\tau - s)\Delta} P_N B(\psi(t_n + s)) \rmd s.  
	\end{equation}
	Recall the construction of the EWI-FS method, we have
	\begin{equation}\label{eq:psin}
		\Phi_h^\tau(P_N \psi(t_n)) = e^{i\tau\Delta} P_N \psi(t_n) - i \int_0^\tau e^{i(\tau-s)\Delta} P_N B(P_N \psi(t_n)) \rmd s. 
	\end{equation}
	Subtracting \cref{eq:psin} from \cref{eq:psi}, and recalling \cref{eq:mcalE_def}, we obtain
	\begin{align}\label{eq:LTE_decomp}
		\mathcal{E}^n 
		&= -i\int_0^\tau e^{i(\tau - s)\Delta} P_N \left( B(\psi(t_n + s)) - B(P_N\psi(t_n)) \right) \rmd s \notag \\
		&= -i\int_0^\tau e^{i(\tau - s)\Delta} P_N \left( B(\psi(t_n + s)) - B(\psi(t_n)) \right) \rmd s \notag \\
		&\quad -i\int_0^\tau e^{i(\tau - s)\Delta} P_N \left( B(\psi(t_n)) - B(P_N\psi(t_n)) \right) \rmd s =: r^n + r^n_h. 
	\end{align}
	By \cref{lem:diff_g} and the standard projection error estimate of $P_N$, we have, for any $0<\vep<1$, 
	\begin{align}
		\| r^n_h \|_{L^2} 
		&\leq \tau \| B(\psi(t_n)) - B(P_N\psi(t_n)) \|_{L^2} \notag \\
		&\leq \tau \left(\| V \|_{L^\infty} \| \psi(t_n) - P_N \psi(t_n) \|_{L^2} + \| g(\psi(t_n)) - g(P_N\psi(t_n)) \|_{L^2}\right) \notag \\
		&\lesssim \tau h^2 + \tau \vep + \tau (1+L_\vep(M)) \| \psi(t_n) - P_N\psi(t_n) \|_{L^2} \notag \\
		&\lesssim \tau \vep + (1+|\ln(\vep)| + \ln(1+M)) \tau h^2, 
	\end{align}
	which implies, by choosing $ \vep = h^2 $, 
	\begin{equation}\label{eq:rh_est}
		\| r^n_h \|_{L^2} \lesssim \tau (1 + |\ln(h)|)h^2. 
	\end{equation}
	Similarly, by \cref{lem:diff_g}, we have
	\begin{align}
		\| r^n \|_{L^2} 
		&\leq \int_0^\tau \| B(\psi(t_n + s)) - B(\psi(t_n)) \|_{L^2} \rmd s \notag \\
		&\lesssim \tau \vep + (1+\| V \|_{L^\infty}+L_\vep(M)) \int_0^\tau \| \psi(t_n+s) - \psi(t_n) \|_{L^2} \rmd s \notag \\
		&\lesssim \tau \vep + (1+|\ln(\vep)| + \ln(1+M))\tau^2 \| \partial_t \psi \|_{L^\infty([t_n, t_{n+1}]; L^2(\Omega))}, 
	\end{align}
	which implies, by choosing $\vep = \tau$, 
	\begin{equation}\label{eq:r_est}
		\| r^n \|_{L^2} \lesssim (1+|\ln(\tau)|)\tau^2, 
	\end{equation}
	which together with \cref{eq:rh_est} completes the proof by \cref{eq:LTE_decomp}. 
\end{proof}

\subsection{$H^2$-conditional $L^2$-stability}
In this subsection, we use the energy method together with \cref{lem:lip} to establish an $H^2$-conditional $L^2$-stability of the EWI-FS method \cref{eq:EWI-FS}. Due to the singularity (and the lack of Lipschitz continuity) of the logarithmic nonlinearity, it is essential to use energy estimates to get rid of the exponential dependence on the unbounded Lipschitz constant of the nonlinearity. 

Recalling \cref{eq:Phi_def}, we have the following estimate of the numerical flow $\Phi^\tau_h$. 
\begin{proposition}[Stability]\label{prop:stability}
	Let $v_0, w_0 \in X_N$ such that $\| v_0 \|_{H^\frac{7}{4}} \leq M_0$, $\| v_0 \|_{H^2} \leq M_1$ and $\| w_0 \|_{H^2} \leq M_2$. We have, for $0<\tau<e^{-1}$, 
	\begin{equation*}
		\| \Phi_h^\tau(v_0) - \Phi_h^\tau(w_0) \|_{L^2} \leq e^{C_\text{\rm s}\tau} \| v_0-w_0 \|_{L^2} + C(M_0) \tau^2 |\ln \tau| (1+M_1) + C(M_2) \tau^2 |\ln \tau|, 
	\end{equation*}
	where $C_\text{\rm s} = 2|\lambda| + \| V \|_{L^\infty}$. 
\end{proposition}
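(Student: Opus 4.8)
The plan is to realise $\Phi_h^\tau(v_0)-\Phi_h^\tau(w_0)$ as the time-$\tau$ value of a continuous interpolation and to run an $L^2$ energy estimate in which the algebraic identity \cref{lem:lip} replaces the unbounded, log-type Lipschitz bound that a direct Duhamel/triangle-inequality argument would incur (and which would produce a useless $e^{C\tau|\ln\tau|}$ rate). Concretely, for $0\le t\le\tau$ let $v(t),w(t)\in X_N$ solve the linear ODEs $i\partial_t v=-\Delta v+P_N B(v_0)$, $v(0)=v_0$ and $i\partial_t w=-\Delta w+P_N B(w_0)$, $w(0)=w_0$, whose forcing is frozen at the initial data; solving these explicitly (the integrating factor produces exactly the $\varphi_1$ in \cref{eq:Phi_def}) gives $v(\tau)=\Phi_h^\tau(v_0)$ and $w(\tau)=\Phi_h^\tau(w_0)$, so with $e(t):=v(t)-w(t)$ and $u_0:=v_0-w_0=e(0)$ I only need to bound $\|e(\tau)\|_{L^2}$. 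Since $e(t)\in X_N$ and $\langle\Delta e,e\rangle\in\R$ under the periodic boundary condition, the self-adjointness of $P_N$ yields the energy identity $\frac{d}{dt}\|e(t)\|_{L^2}^2=2\,\mathrm{Im}\langle B(v_0)-B(w_0),e(t)\rangle$.

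The next step is to split $e(t)=e(0)+\delta(t)$ with $\delta(t):=e(t)-e(0)$, and $B=V\cdot{}+\lambda g$, inside this identity. For the potential part I would keep $e(t)$ and use Young's inequality, $2\,\mathrm{Im}\langle Vu_0,e(t)\rangle\le\|V\|_{L^\infty}(\|u_0\|_{L^2}^2+\|e(t)\|_{L^2}^2)$, which contributes one factor $\|V\|_{L^\infty}$ to the Gronwall rate and one to the constant forcing. For the logarithmic part I would pair $g(v_0)-g(w_0)$ against $e(0)=u_0$ and invoke \cref{lem:lip} to obtain the crucial $\lambda\,\mathrm{Im}\langle g(v_0)-g(w_0),u_0\rangle\le 2|\lambda|\,\|u_0\|_{L^2}^2$, i.e.\ a clean $\|u_0\|_{L^2}^2$ with \emph{no} logarithm; the leftover $\lambda\,\mathrm{Im}\langle g(v_0)-g(w_0),\delta(t)\rangle$ is the genuine source and is estimated by Cauchy--Schwarz. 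Feeding these into a Gronwall inequality, and crucially keeping the $\|u_0\|_{L^2}^2$-terms as constant forcing rather than rewriting them back into $\|e(t)\|_{L^2}^2$, yields $\|e(\tau)\|_{L^2}^2\le e^{2C_\mathrm{s}\tau}\|u_0\|_{L^2}^2+e^{\|V\|_{L^\infty}\tau}\int_0^\tau S(t)\,dt$ with $C_\mathrm{s}=2|\lambda|+\|V\|_{L^\infty}$ and $S(t)=2|\lambda|\,\|g(v_0)-g(w_0)\|_{L^2}\|\delta(t)\|_{L^2}$.

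It then remains to estimate the source. I would bound $\delta(t)=\delta_v(t)-\delta_w(t)$ separately on each solution: from $\delta_v(t)=(e^{it\Delta}-1)v_0-it\varphi_1(it\Delta)P_N B(v_0)$, the symbol bound $\|(e^{it\Delta}-1)v_0\|_{L^2}\le t\|v_0\|_{H^2}$ and the uniform bound $|\varphi_1(i\theta)|\le1$ $(\theta\in\R)$ give $\|\delta_v(t)\|_{L^2}\le t(M_1+\|B(v_0)\|_{L^2})$, and the embedding $H^{7/4}\hookrightarrow L^\infty$ together with $|z\ln|z|^2|\lesssim|z|^{1+\varsigma}+|z|^{1-\varsigma}$ bounds $\|B(v_0)\|_{L^2}\le C(M_0)$, whence $\|\delta_v(t)\|_{L^2}\le tC(M_0)(1+M_1)$; symmetrically $\|\delta_w(t)\|_{L^2}\le tC(M_2)$ using $H^2\hookrightarrow L^\infty$. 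For $\|g(v_0)-g(w_0)\|_{L^2}$ I would apply \cref{lem:diff_g} with the regularisation parameter chosen as $\vep=\tau^2$, giving $\|g(v_0)-g(w_0)\|_{L^2}\le C\tau^2+C(M_0,M_2)(1+|\ln\tau|)\|u_0\|_{L^2}$, the choice $\vep=\tau^2$ making the additive error higher order while only doubling the logarithm. Since $\|g(v_0)-g(w_0)\|_{L^2}$ is constant in $t$, it follows that $\int_0^\tau S(t)\,dt=a\|u_0\|_{L^2}+b$ with $a\lesssim\tau^2|\ln\tau|\,[C(M_0)(1+M_1)+C(M_2)]$ and $b\lesssim\tau^4[\cdots]$; completing the square in $e^{2C_\mathrm{s}\tau}\|u_0\|_{L^2}^2+a\|u_0\|_{L^2}+b$ and taking square roots produces exactly $e^{C_\mathrm{s}\tau}\|u_0\|_{L^2}+C(M_0)\tau^2|\ln\tau|(1+M_1)+C(M_2)\tau^2|\ln\tau|$ (using $\tau<e^{-1}$ so that $1+|\ln\tau|\lesssim|\ln\tau|$).

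The main obstacle is organising the estimate so that the singular, log-type modulus of continuity of $g$ never enters either the exponential rate or the leading source order. This forces two design choices that must be made together: the nonlinear difference has to be paired with the \emph{frozen} increment $e(0)$, not $e(t)$, so that \cref{lem:lip} applies, yet this very increment must be kept as constant forcing and \emph{never} rewritten through $\|e(t)\|\le\|e(0)\|+\|\delta(t)\|$, because a term $\|\delta(t)\|_{L^2}^2\sim t^2$ appearing in the source would integrate to $O(\tau^3)$ and degrade the bound to $O(\tau^{3/2})$. Arranging the regularisation scale $\vep=\tau^2$ and the split $\delta=\delta_v-\delta_w$ to cooperate with these constraints, so that the final source is linear in $\|u_0\|_{L^2}$ plus an $O(\tau^4)$ remainder, is the delicate part of the argument.
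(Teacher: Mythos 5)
Your argument is correct and rests on the same scaffolding as the paper's proof---the frozen-forcing interpolation $v(t)=\Phi_h^t(v_0)$, $w(t)=\Phi_h^t(w_0)$, the $L^2$ energy identity for $e(t)=v(t)-w(t)$, and the algebraic inequality \cref{lem:lip} to keep the logarithm out of the exponential rate---but you decompose the source term differently. The paper perturbs the \emph{nonlinearity}: it writes $P_NB(v_0)-P_NB(w_0)=\bigl[P_NB(v(t))-P_NB(w(t))\bigr]-R_1+R_2$ with $R_1=P_NB(v(t))-P_NB(v_0)$ and $R_2=P_NB(w(t))-P_NB(w_0)$, applies \cref{lem:lip} to the matched pairing $\langle B(v(t))-B(w(t)),e(t)\rangle$ at time $t$, and estimates $R_1,R_2$ via \cref{lem:diff_g} (with $\vep=\tau$) applied to the \emph{time increments} $v(t)-v_0$ and $w(t)-w_0$; the resulting forcing terms are pure constants $C(M_0)\tau|\ln\tau|(1+M_1)+C(M_2)\tau|\ln\tau|$, independent of $\|v_0-w_0\|_{L^2}$, so a single Gronwall step closes the argument. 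You instead perturb the \emph{test function}: you keep $B(v_0)-B(w_0)$ fixed, split $e(t)=e(0)+\delta(t)$, apply \cref{lem:lip} at time $0$, and pay for the cross term by estimating $\|g(v_0)-g(w_0)\|_{L^2}$ via \cref{lem:diff_g} applied to the \emph{difference of the two initial data} (with $\vep=\tau^2$). This makes your source linear in $\|u_0\|_{L^2}$ and forces the completing-the-square step at the end, which the paper avoids; in exchange, you never need the $L^\infty$ bound \cref{R1_est_1} on $v(t)$ along the flow (and hence not the fractional smoothing estimate \cref{lem:vphi} with $\alpha=7/4$ there), only $\|v_0\|_{L^\infty}\lesssim M_0$ and $\|w_0\|_{L^\infty}\lesssim M_2$ to control $L_\vep$ and $\|B(v_0)\|_{L^2}$, $\|B(w_0)\|_{L^2}$. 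Your bookkeeping of the constants checks out: the rate $\|V\|_{L^\infty}$ from Young's inequality plus the absorbed forcing $(\|V\|_{L^\infty}+4|\lambda|)\tau\|u_0\|^2$ combine to exactly $e^{2C_{\rm s}\tau}\|u_0\|^2$ with $C_{\rm s}=2|\lambda|+\|V\|_{L^\infty}$, and both routes yield the same $\tau^2|\ln\tau|$ defect, so the two proofs are of essentially equal strength.
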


\begin{proof}
	Let $v(t) = \Phi_h^t(v_0)$ and $w(t) = \Phi_h^t(w_0)$ for $0 \leq t \leq \tau$. Then $v(t)$ and $w(t)$ satisfy the following linear inhomogeneous Schr\"odinger equations as
	\begin{equation}\label{eq:SE_v}
		i \partial_t v(t) = -\Delta v(t) + P_N B(v_0), \quad 0 \leq t \leq \tau, \quad v(0) = v_0 \in X_N, 
	\end{equation}
	and
	\begin{equation}\label{eq:SE_w}
		i \partial_t w(t) = -\Delta w(t) + P_N B(w_0), \quad 0 \leq t \leq \tau, \quad w(0) = w_0 \in X_N. 
	\end{equation}
	Define $e(t) = v(t) - w(t) \in X_N$. Subtracting \cref{eq:SE_w} from \cref{eq:SE_v}, one obtains
	\begin{align}\label{stability_e}
		i \partial_t e(t) 
		&= -\Delta e(t) + P_N B(v_0) - P_N B(w_0) \notag \\
		&= -\Delta e(t) + P_N B(v_0) - P_N B(v(t)) + P_N B(v(t))  - P_N B(w(t)) \notag \\
		&\quad + P_N B(w(t))  - P_N B(w_0) \notag \\
		&=  -\Delta e(t) + P_N B(v(t))  - P_N B(w(t)) - R_1 + R_2, 
	\end{align}
	where
	\begin{equation}\label{eq:R_def}
		R_1 = P_N B(v(t)) - P_N B(v_0), \quad R_2 = P_N B(w(t))  - P_N B(w_0). 
	\end{equation}
	Multiplying $\overline{e(t)}$ on both sides of \cref{stability_e}, taking the imaginary part, and integrating over $\Omega$, we get
	\begin{align}\label{stability_e_2}
		\frac{1}{2} \frac{\rmd}{\rmd t} \| e(t) \|_{L^2}^2 
		&= \text{Im} \langle P_N B(v(t))  - P_N B(w(t)), e(t) \rangle \notag \\
		&\quad - \text{Im} \langle R_1(t), e(t) \rangle + \text{Im} \langle R_2(t), e(t) \rangle. 
	\end{align}
	By \cref{lem:lip}, recalling that $e \in X_N$, we have
	\begin{align}\label{r1}
		\text{Im} \langle P_N B(v(t))  - P_N B(w(t)), e(t) \rangle 
		&= \text{Im} \langle B(v(t))  - B(w(t)), e(t) \rangle \notag \\
		&\leq (\| V \|_{L^\infty} + 2|\lambda|) \| e(t) \|_{L^2}^2 = C_\text{s} \| e(t) \|_{L^2}^2, 
	\end{align}
	which implies from \cref{stability_e_2} by Cauchy-Schwartz inequality that
	\begin{equation}\label{e_eq}
		\frac{\rmd}{\rmd t} \| e(t) \|_{L^2} \leq C_\text{s} \| e(t) \|_{L^2} + \| R_1(t) \|_{L^2} + \| R_2(t) \|_{L^2}. 
	\end{equation}
	From \cref{eq:R_def}, using the $L^2$-projection property of $P_N$ and  \cref{lem:diff_g}, we have
	\begin{align}\label{R1_est}
		\| R_1 \|_{L^2} 
		&\leq \| B(v(t))  - B(v_0) \|_{L^2} \notag \\
		&\lesssim \vep +  (\| V \|_{L^\infty}+1+L_\vep(\| v \|_{L^\infty([0, \tau]; L^\infty)}))\sup_{0 \leq t \leq \tau} \| v(t) - v_0 \|_{L^2}.
	\end{align}
	We first estimate $\| v(t) \|_{L^\infty}$. Recalling $v(t)=\Phi_h^t(v_0)$ and \cref{eq:Phi_def}, we get
	\begin{equation}\label{v}
		v(t) = e^{it\Delta}v_0 - it\vphi_1(it\Delta) P_N B(v_0), \quad 0 \leq t \leq \tau. 
	\end{equation}
	For any $\phi \in L^2(\Omega)$ and $t > 0$, we have (see \citet[Lemma 3.9]{bao2023_EWI})
	\begin{equation}\label{lem:vphi}
		\| \vphi_1(i t \Delta) \phi \|_{H^\alpha} \lesssim t^{-\alpha/2} \| \phi \|_{L^2}, \quad 0 \leq \alpha \leq 2. 
	\end{equation}
	From \cref{v}, using \cref{lem:vphi}, the Sobolev embedding $H^\frac{7}{4} \hookrightarrow L^\infty$, the isometry property of $e^{it\Delta}$, we have, when $\tau < \tau_0$, 
	\begin{align}\label{R1_est_1}
		\| v(t) \|_{L^\infty} 
		&\lesssim \| v(t) \|_{H^\frac{7}{4}} \leq \| v_0 \|_{H^\frac{7}{4}} + C t^\frac{1}{8} \| P_N B(v_0) \|_{L^2} \notag \\
		&\leq \| v_0 \|_{H^\frac{7}{4}} + C t^\frac{1}{8} C(\| v_0 \|_{L^\infty}) \leq C(M_0).  
	\end{align}
	Then, using \cref{v} and \cref{lem:vphi} with $\alpha=0$, and the following estimate (see \citep{bao2022})
	\begin{equation}
		\| (e^{i t \Delta} - I) \phi \|_{L^2} \leq t \| \Delta \phi \|_{L^2}, \qquad \phi \in H^2_\text{per}(\Omega), 
	\end{equation}
	we have
	\begin{align}\label{R1_est_2}
		\| v(t) - v_0 \|_{L^2} 
		&\leq \| (e^{it\Delta} - I) v_0 \|_{L^2} + t \| \vphi_1(it\Delta) P_N B(v_0) \|_{L^2} \notag \\
		&\leq \tau \| \Delta v_0 \|_{L^2} + \tau \| B(v_0) \|_{L^2} \notag \\
		&\leq \tau M_1 + \tau \| V \|_{L^\infty} \| v_0 \|_{L^2} + \tau C(\| v_0 \|_{L^\infty}) \notag \\
		&\leq \tau M_1 + \tau C(M_0). 
	\end{align}
	From \cref{R1_est}, using \cref{R1_est_1,R1_est_2} and taking $\vep = \tau$, we obtain
	\begin{align}\label{R1_final}
		\| R_1 \|_{L^2} 
		&\lesssim \tau + (1+L_\tau(C(M_0))) \tau (M_1 + C(M_0)) \notag \\
		&\lesssim C(M_0) \tau(1 + |\ln \tau|) (1+M_1) \leq C(M_0) \tau |\ln \tau| (1+M_1).  
	\end{align}
	The same procedure yields
	\begin{equation}\label{R2_final}
		\| R_2 \|_{L^2} \lesssim C(M_2) \tau |\ln \tau|. 
	\end{equation}
	From \cref{e_eq}, using Gronwall's inequality, noting \cref{R1_final,R2_final}, we get
	\begin{equation}
		\| e(t) \|_{L^2} \leq e^{C_\text{s}\tau} \| e(0) \|_{L^2} + C(M_0) \tau^2 |\ln \tau| (1+M_1) + C(M_2) \tau^2 |\ln \tau|, 
	\end{equation}
	which completes the proof. 
\end{proof}

\subsection{Proof of the main result}
With \cref{prop:LTE,prop:stability}, we are able to obtain the global error estimate of the EWI-FS method \cref{eq:EWI-FS} by mathematical induction and the inverse inequalities \citep{book_spectral}
\begin{equation}\label{eq:inv}
	\| \phi \|_{H^\alpha} \leq C_\text{inv} h^{-\alpha} \| \phi \|_{L^2}, \qquad \phi \in X_N, \quad \alpha > 0. 
\end{equation}

\begin{proof}[Proof of \cref{thm:main}]
	Define the error function $e^n = P_N \psi(t_n) - \psihn{n} \in X_N$ for $0 \leq n \leq T/\tau$. By the standard projection error estimate of $P_N$,  it suffices to prove the error bound for $ e^n $. Note that $e^0 = 0$. When $0 \leq n \leq  T/\tau-1$, we have
	\begin{align}\label{eq:error}
		e^{n+1} 
		&= P_N \psi(t_{n+1}) - \Phi_h^\tau(\psihn{n}) \notag \\
		&= P_N \psi(t_{n+1}) - \Phi_h^\tau(P_N \psi(t_n)) + \Phi_h^\tau(P_N \psi(t_n)) - \Phi_h^\tau(\psihn{n}). 
	\end{align}
	From \cref{eq:error}, using \cref{prop:LTE,prop:stability}, we have, if $\| \psihn{n} \|_{H^\frac{7}{4}} \leq M_0$, 
	\begin{align}\label{eq:error_eq}
		\| e^{n+1} \|_{L^2} 
		&\leq e^{C_\text{s}\tau} \| e^n \|_{L^2} + C(M_0) \tau^2 |\ln \tau| + C(M_0) \tau^2 |\ln \tau|\| \psihn{n} \|_{H^2} \notag \\
		&\quad +  C(M) \tau^2 |\ln(\tau)| + C(M) \tau h^2 |\ln(h)|, \quad 0 \leq n \leq T/\tau-1.
	\end{align}
	%		It follows that, if $\| \psihn{n} \|_{H^\frac{7}{4}} \leq M_0$, 
	%		\begin{align}
		%			\| e^{n+1} \|_{L^2} 
		%			&\leq e^{C\tau} \| e^n \|_{L^2} + \tau^2 (1+L_\tau(C(M_0))) \| \psihn{n} \|_{H^2} + C_\text{st}(\tau, M_0)\tau^2 \notag \\
		%			&\quad + \tau^2 |\ln(\tau)| + \tau h^2 |\ln(h)|, \qquad 0 \leq n \leq \frac{T}{\tau}-1.  
		%		\end{align}
	We prove the result by mathematical induction. Assume that {for all $n$ such that $0 \leq n \leq m \leq T/\tau-1$, we have}
	\begin{equation}\label{eq:assumption}
		\begin{aligned}
			&\| e^{n} \|_{L^2} \leq C_0( C_1 \tau |\ln(\tau)| |\ln(h)|+h^2|\ln(h)|), \quad \| \psihn{n} \|_{H^\frac{7}{4}} \leq 1+M, \\
			&\| \psihn{n} \|_{H^2} \leq C_1 |\ln(h)|, 
		\end{aligned}
	\end{equation}
	where $C_0 = 2e^{C_\text{s}T} C(M) $ and $C_1 = C_\text{inv} C_0 + 1$ are both fixed constants depending exclusively on $M$ and $T$. {We shall prove in the following that \cref{eq:assumption} holds for $n = m+1$.} From \cref{eq:error_eq}, using the assumptions \cref{eq:assumption}, we obtain,  
	\begin{align*}
		\| e^{n+1} \|_{L^2} 
		&\leq e^{C_\text{s}\tau} \| e^n \|_{L^2} + C(M) \tau^2 |\ln(\tau)| + C(M)  \tau h^2|\ln(h)| \\
		&\quad + C(M) C_1 \tau^2 |\ln (\tau)| |\ln(h)|, \qquad 0 \leq n \leq m, 
	\end{align*}
	which implies, by the discrete Gronwall's inequality, that 
	\begin{align}\label{eq:induction1}
		\| e^{m+1} \|_{L^2} 
		&\leq e^{C_\text{s}T} C(M) (\tau |\ln (\tau)|+  h^2|\ln(h)| + C_1 \tau |\ln(\tau)| |\ln (h)|) \notag \\
		&\leq 2e^{C_\text{s}T} C(M) (C_1\tau |\ln (\tau)| |\ln(h)| + h^2|\ln(h)|) \notag \\
		&=  C_0 (C_1\tau |\ln (\tau)| |\ln(h)| + h^2|\ln(h)|). 
	\end{align}
	Then by the inverse inequality \cref{eq:inv} with $\alpha = 7/4$, there exists $h_0$ sufficiently small depending exclusively on $M$ and $T$ such that when $h<h_0$, we have, by recalling \cref{eq:induction1} and $\tau |\ln \tau| \leq \tilde C h^2/|\ln h|$, 
	\begin{align}\label{eq:induction2}
		\| \psihn{m+1} \|_{H^\frac{7}{4}} 
		&\leq \| e^{m+1} \|_{H^\frac{7}{4}} + \| P_N \psi(t_{m+1}) \|_{H^\frac{7}{4}} \leq C_\text{inv}h^{-\frac{7}{4}} \| e^{m+1} \|_{L^2} + M \notag \\
		&\leq C_\text{inv}C_0 (C_1 \tilde C h^\frac{1}{4} + h^\frac{1}{4} |\ln(h)|) + M \leq 1+M. 
	\end{align}
	Moreover, using the inverse inequality \cref{eq:inv} with $\alpha = 2$, recalling $\tau|\ln(\tau)| \leq \tilde C h^2/|\ln(h)|$ and \cref{eq:induction1}, we have
	\begin{align}\label{eq:induction_H2}
		\| \psihn{m+1} \|_{H^2} 
		&\leq \| e^{m+1} \|_{H^2} + \| P_N \psi(t_{m+1}) \|_{H^2} \notag \\
		&\leq C_\text{inv}h^{-2} \| e^{m+1} \|_{L^2} + M \notag \\
		&\leq C_\text{inv}C_0\left(C_1 \tilde C + |\ln(h)|  \right) + M,   
	\end{align}
	which implies, by choosing $h_0$ such that $C_\text{inv} C_0 C_1 \tilde C + M \leq |\ln h_0| \leq |\ln h|$, 
	\begin{align}\label{eq:induction3}
		\| \psihn{m+1} \|_{H^2} 
		&\leq C_\text{inv}C_0 |\ln(h)| + C_\text{inv}C_0C_1 \tilde C + M \notag \\ 
		&\leq (C_\text{inv}C_0+1) |\ln(h)| = C_1 |\ln (h)|.  
	\end{align}
	Combing \cref{eq:induction1,eq:induction2,eq:induction3}, we prove \cref{eq:assumption} for $n = m+1$ and thus for all $0 \leq n \leq T/\tau $ by mathematical induction. By noting that $e^n \in X_N$ for $n \geq 0$, the $H^1$-norm error bound can be obtained from the $L^2$-norm error  bound directly with the inverse inequality under the time step size restriction $\tau |\ln \tau| \lesssim h^2/|\ln h|$ as
	\begin{align*}
		\| e^{n} \|_{H^1} \lesssim h^{-1} \| e^{n} \|_{L^2} 
		&\lesssim \sqrt{\frac{\tau |\ln \tau| |\ln h|}{h^2}} \sqrt{\tau|\ln \tau| |\ln h|} + h |\ln h| \notag \\
		&\leq \sqrt{\tau}|\ln \tau| + h |\ln h|.  
	\end{align*}
	The proof is thus completed. 
\end{proof}

\section{Numerical results}\label{sec:num}
In this section, we provide some numerical results to validate our error estimate for the EWI-FS method \cref{eq:EWI-FS_scheme} and to show the necessity of the time step size restriction. 
%We also present comparisons with the time-splitting method to demonstrate the superiority of the EWI-FS method. 
We also apply our method to study the soliton collision under disorder potential in one dimension (1D) and vortex dipole dynamics in two dimensions (2D). 

\subsection{Convergence test}
We first test the convergence of the EWI-FS method. In this subsection, we consider a one dimensional setting with $d=1$ and $\Omega = (-16, 16)$. To quantify the error, we define the error functions as follows:
\begin{equation*}
	e_{L^2}(t_n): = \| \psi(t_n) - \psihn{n} \|_{L^2}, \quad e_{H^1}(t_n) = \| \psi(t_n) - \psihn{n} \|_{H^1}, \quad 0 \leq n \leq T/\tau. 
\end{equation*}

The following two types of initial data will be considered: 
\begin{itemize}
	\item[(i)] $H^2$-initial datum
	\begin{equation}\label{eq:ini1}
		\psi_0(x) = x|x|^{0.51}e^{-x^2/2}, \quad x \in \Omega.
	\end{equation}
	
	\item[(ii)] smooth initial datum
	\begin{equation}\label{eq:ini2}
		\psi_0(x) = c_1e^{-\frac{k_1(x-x_0)^2}{2} - ivx} + c_2e^{-\frac{k_2(x+x_0)^2}{2} + ivx}, \quad x \in \Omega, 
	\end{equation}
	where $x_0, v, k_1, k_2, c_1, c_2$ are some real constants, i.e., the initial data is the sum of two Gaussons at location $\pm x_0$ with velocity $\pm v$.  
\end{itemize} 
%	\begin{itemize}
	%		\item[(i)] $H^2$-initial datum without potential
	%		\begin{equation}
		%			V_1(x) \equiv 0, \quad \psi_0(x) = x|x|^{0.51}e^{-x^2/2}, \quad x \in \Omega. 
		%		\end{equation}
	%		
	%		\item [(ii)] Two solitons interacting under a square-well potential
	%		\begin{equation}
		%			V_2(x) = \left\{
		%			\begin{aligned}
			%				&-4, && x \in (-2, 2) \\
			%				&0, && \text{otherwise} 
			%			\end{aligned}
		%			\right., \  \psi_0(x) = e^{-\frac{(x-4)^2}{2} - 2ix} + e^{-\frac{(x+4)^2}{2} + 2ix}, \  x\in \Omega. 
		%		\end{equation}
	%	\end{itemize} 
The initial datum \cref{eq:ini1} is chosen as an odd function to ensure that the singularity of the logarithmic nonlinearity at the origin is revealed since the exact solution satisfies $\psi(0, t) \equiv 0$. The initial data in \cref{eq:ini2} is used to simulate the collision of two Gaussons. As we will show in the following, we can observe significantly different behaviour of the EWI-FS method for both types of initial data compared to the power-type nonlinearity in \citep{bao2023_EWI,bao2023_sEWI}. %due to the influence of the singularity of the logarithmic nonlinearity

We start with the convergence test and fix $\Omega = (-16, 16)$, $\lambda = -1$ in \cref{LogSE}. The final time is chosen as $T=1$. The ``exact" solutions are computed using the Strang time-splitting Fourier spectral method \citep{bao2023_improved,bao2019} with $\tau = \tau_\text{e} = 10^{-6}$ and $h = h_\text{e} = 2^{-9}$. When testing the temporal convergence, we compute the errors with varying $\tau$ from $10^{-5}$ to $10^{-1}$ for each $h = 2^{-k} \ (k=2, \cdots, 7)$. When testing the spatial convergence, we fix $\tau = \tau_\text{e}$ and show the errors computed with varying $h$ from $2^{-5}$ to $2^{-1}$. 
%	As we will show in the following, the EWI-FS method exhibits behavior significantly different from that observed in cases of power-type nonlinearity \citep{bao2023_EWI,bao2023_sEWI}. Similar phenomena, though less clear than in case (i), can also be observed in case (ii). Moreover, although the initial datum $\psi_0$ in \cref{eq:ini2} is smooth, due to the low regularity of the discontinuous $L^\infty$-potential $V_2$, the exact solution is still of low regularity (roughly $H^{2.5}$). The exact solution is computed using the Strang time-splitting Fourier spectral method \citep{bao2023_improved,bao2019} with $\tau = \tau_\text{e} = 10^{-6}$ and $h = h_\text{e} = 2^{-9}$. 

We first consider the case of \cref{LogSE} with an $H^2$-initial datum \cref{eq:ini1} and without potential (i.e., $V(x) \equiv 0$). The temporal errors of the EWI-FS method in $L^2$- and $H^1$-norms are presented in \cref{fig:conv_dt_H2_ini_L2,fig:conv_dt_H2_ini_H1}. In the left figures, each line corresponds to the errors computed with a fixed mesh size $h$ and varying time step size $\tau$. In the right figures, each line represents the errors computed for $\tau$ and $h$ satisfying a fixed ratio $\tau = ch^2$. The spatial errors in $L^2$- and $H^1$-norms are shown in \cref{fig:conv_h} (a). 

From \cref{fig:conv_dt_H2_ini_L2,fig:conv_dt_H2_ini_H1}, we see that the EWI-FS method is first-order convergent in $L^2$-norm and half-order convergent in $H^1$-norm under an $H^2$-initial data and the time step size restriction $\tau \lesssim h^2$. Notably, when the time step size restriction is violated, i.e., in the regime where $\tau \gg h^2$, significant convergence order reduction can be observed in both $L^2$- and $H^1$-norms. Moreover, the optimal convergence orders are observed when $\tau = ch^2$ for all different $c$; however, one shall choose $c$ suitably small in practice to enter the asymptotical regime with reasonably small time steps. In terms of space, \cref{fig:conv_h} (a) shows that the spatial convergence is of second order in $L^2$-norm and first order in $H^1$-norm. These results confirm our error estimates in \cref{thm:main} and also indicate that the time step size restriction in \cref{thm:main} is necessary. 

\begin{figure}[htbp]
	\centering
	{\includegraphics[width=0.475\textwidth]{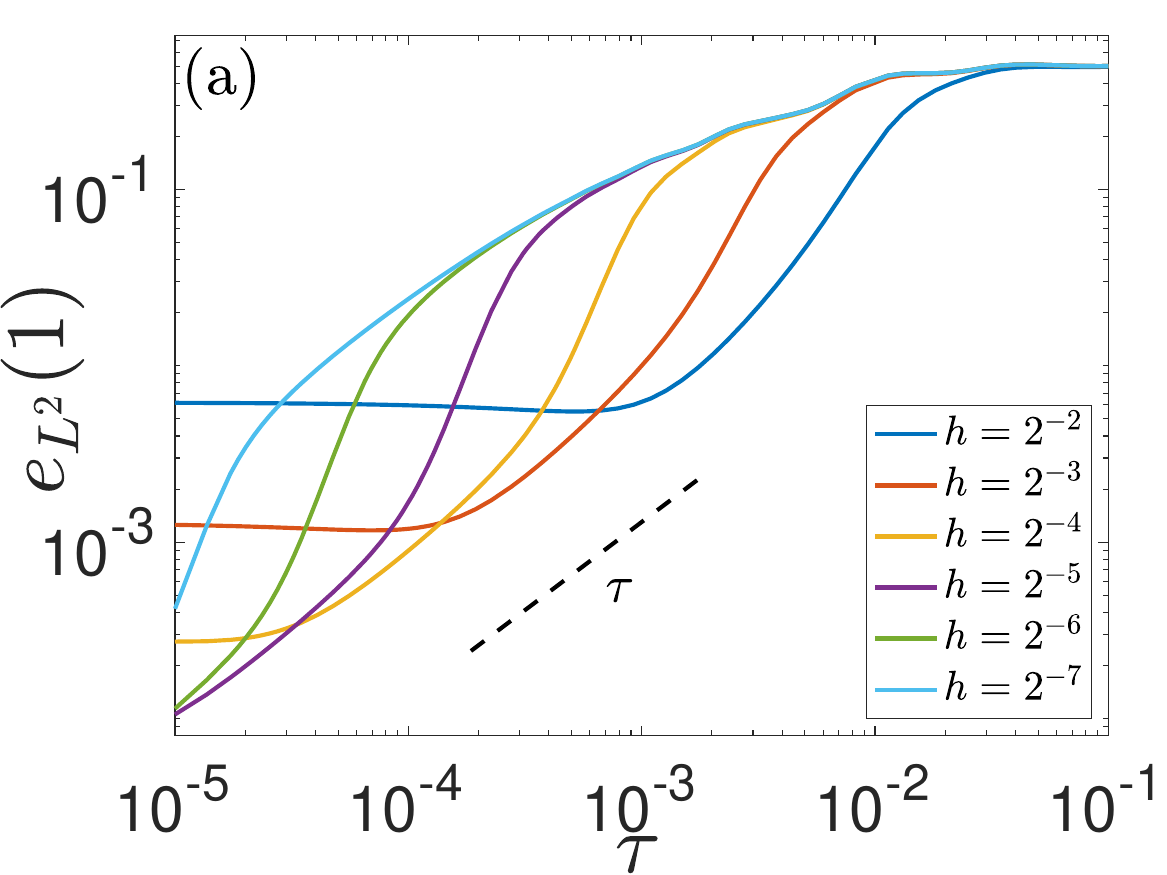}}\hspace{1em}
	{\includegraphics[width=0.475\textwidth]{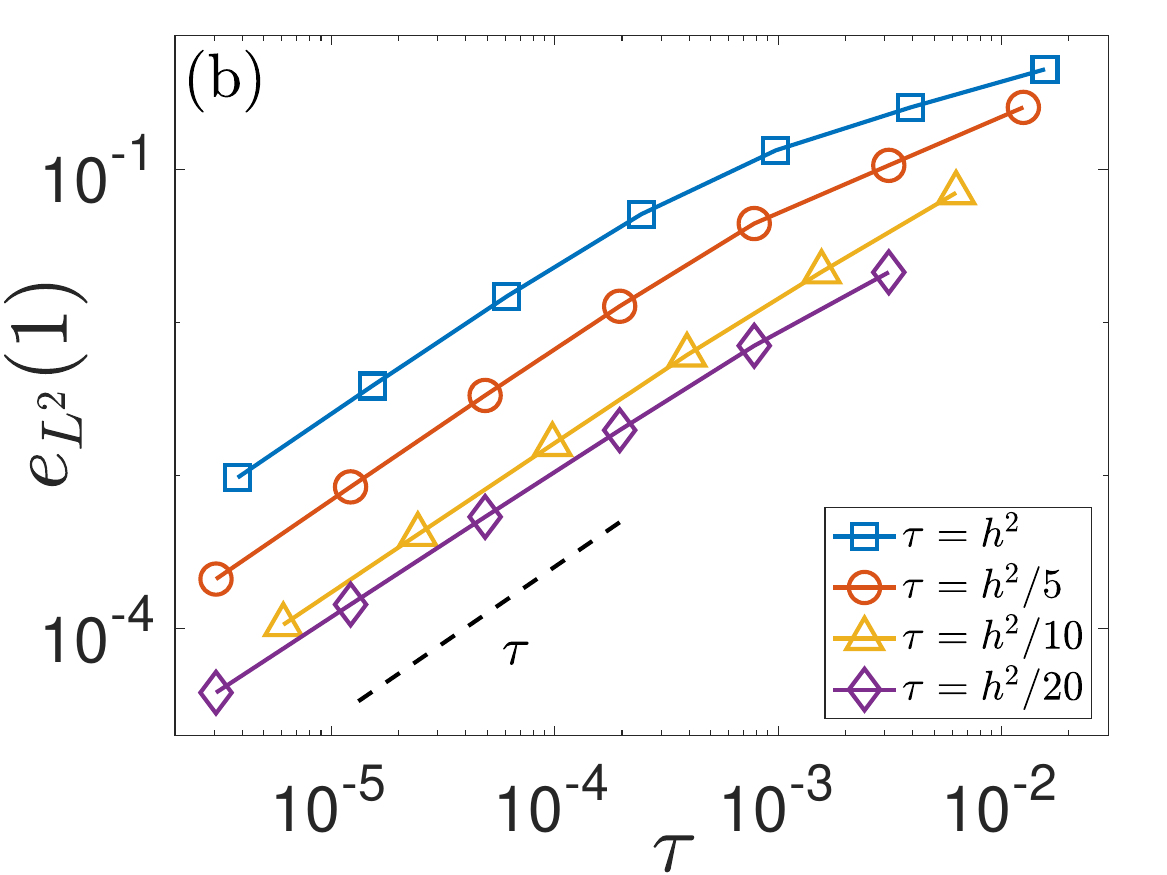}}
	\caption{Errors in $L^2$-norm of the EWI-FS method for the LogSE with $V(x) \equiv 0$ and the initial datum \cref{eq:ini1}: (a) varying $\tau$ for each fixed $h$ and (b) $\tau$ and $h$ satisfying $\tau/h^2 \equiv c$ with different ratio $c$}
	\label{fig:conv_dt_H2_ini_L2}
\end{figure}

\begin{figure}[htbp]
	\centering
	{\includegraphics[width=0.475\textwidth]{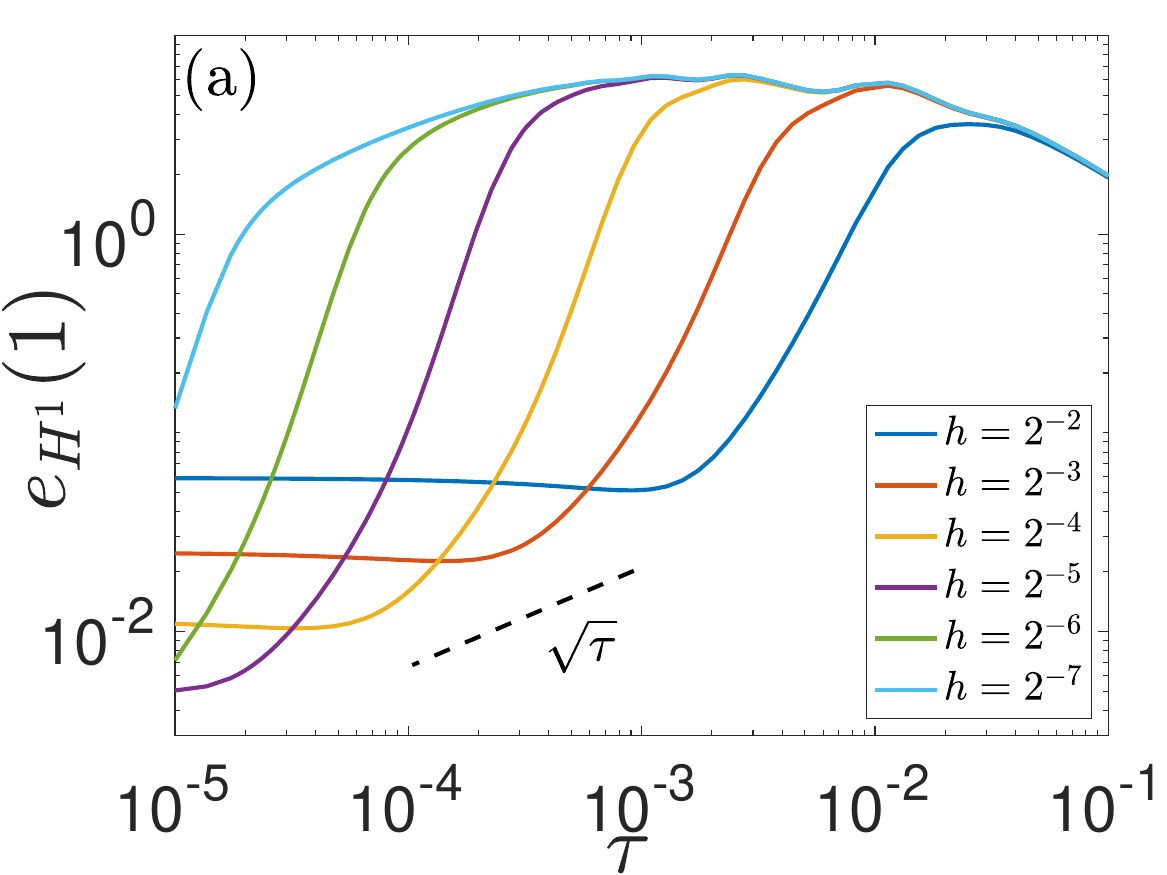}}\hspace{1em}
	{\includegraphics[width=0.475\textwidth]{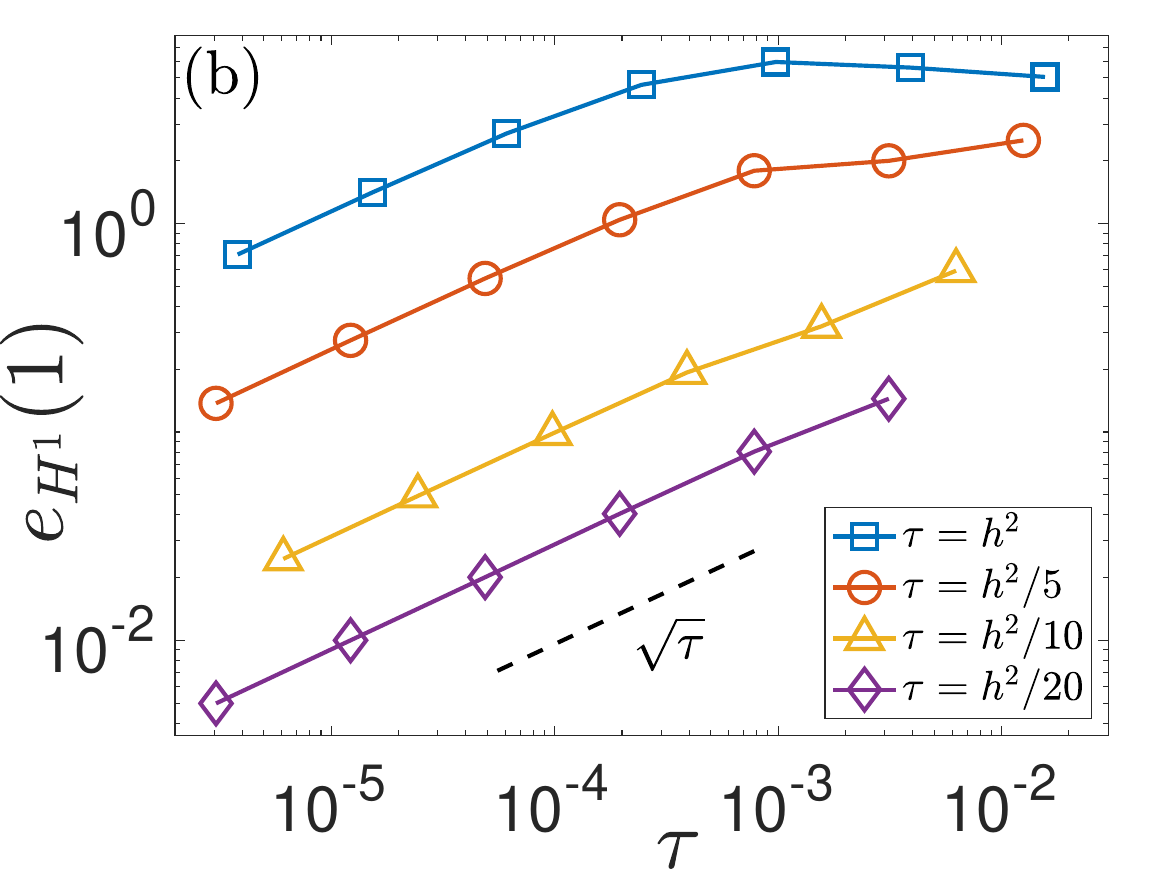}}
	\caption{Errors in $H^1$-norm of the EWI-FS method for the LogSE with $V(x) \equiv 0$ and the initial datum \cref{eq:ini1}: (a) varying $\tau$ for each fixed $h$ and (b) $\tau$ and $h$ satisfying $\tau/h^2 \equiv c$ with different ratio $c$}
	\label{fig:conv_dt_H2_ini_H1}
\end{figure}

%	\begin{table}[h!]
	%		\renewcommand{\arraystretch}{1.5}
	%		\centering
	%		\begin{tabular}{c|ccccc}
		%			\thickhline
		%			& $\tau, h $ & $\tau/4, \  h/2$ & $\tau/4^2, \  h/2^2$ & $\tau/4^3, \  h/2^3$ & $\tau/4^4, \  h/2^4$ \\ \thickhline
		%			$e_{L^2}(1)$ &6.55E-2	&1.99E-2	&5.65E-3	&1.47E-3	&3.81E-4  \\ \hline
		%			order &- & 0.86 & 0.91 & 0.97 & 0.97 \\ \thickhline
		%			$e_{H^1}(1)$ &5.47E-1	&2.94E-1	&1.74E-1	&8.83E-2	&4.43E-2  \\ \hline
		%			order & - & 0.45 & 0.38 & 0.49 & 0.50 \\ \thickhline
		%		\end{tabular}
	%		\caption{Errors in $L^2$- and $H^1$-norms of the EWI-FS method with $(\tau, h)$-pairs satisfying $\tau \sim h^2$ highlighted in \cref{fig:conv_dt_H2_ini}}
	%		\label{tab:conv_dt_H2}
	%	\end{table}

\begin{figure}[htbp]
	\centering
	{\includegraphics[width=0.475\textwidth]{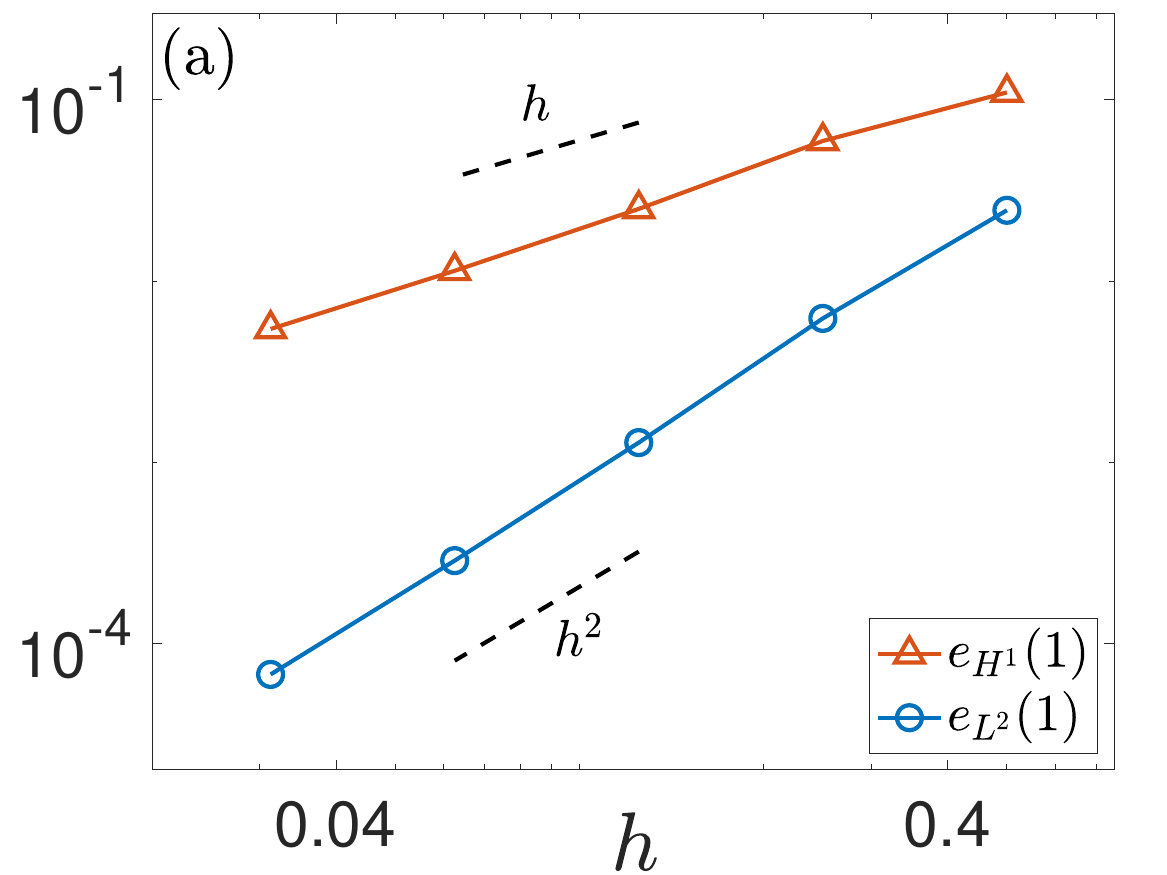}}\hspace{1em}
	{\includegraphics[width=0.475\textwidth]{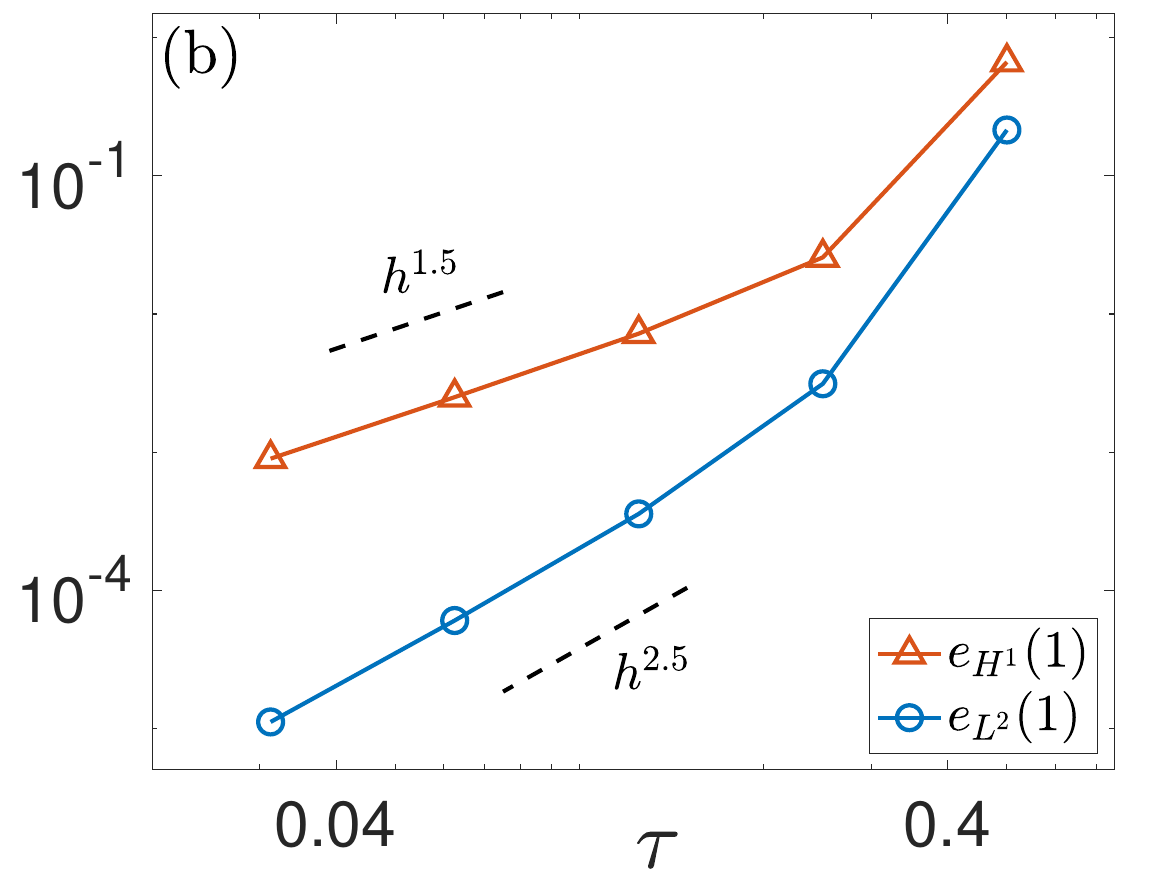}}
	\caption{Spatial errors in $L^2$- and $H^1$-norms of the EWI-FS method for the LogSE with (a) $V(x) \equiv 0$ and the initial datum \cref{eq:ini1} and (b) $V = V_\text{sw}$ and the initial datum \cref{eq:ini2}}
	\label{fig:conv_h}
\end{figure}

Then we study the LogSE \cref{LogSE} under the initial datum \cref{eq:ini2} with $x_0=4, v=2, c_1=c_2=k_1=k_2=1$, and a square-well potential $V=V_\text{sw}$ given by
\begin{equation}\label{eq:poten}
	V_\text{sw}(x) = \left\{
	\begin{aligned}
		&-4, && x \in (-2, 2) \\
		&0, && \text{otherwise} 
	\end{aligned}
	\right., \quad x\in \Omega. 
\end{equation}
In this case, although the initial datum \cref{eq:ini2} is smooth, due to the low regularity of the discontinuous potential $V_\text{sw}$ \cref{eq:poten}, the exact solution is still of low regularity (roughly $H^{2.5}$). The temporal errors (shown in \cref{fig:conv_dt_2Gaussian_ini_L2,fig:conv_dt_2Gaussian_ini_H1}) and the spatial errors (exhibited in \cref{fig:conv_h} (b)) are computed in the same way as described in the previous example. 

From \cref{fig:conv_dt_2Gaussian_ini_L2,fig:conv_dt_2Gaussian_ini_H1}, we see that the temporal error is of first-order in $L^2$-norm and $0.75$-order in $H^1$-norm under the time step size restriction $\tau \lesssim h^2$. Also, there is a similar convergence order reduction when $\tau \gg h^2$, though such order reduction in $L^2$-norm is not as severe as in the previous example. \cref{fig:conv_h} (b) demonstrates that the spatial error is of $2.5$ order in $L^2$-norm and $1.5$ order in $H^1$-norm, consistent with the $H^{2.5}$-regularity of the exact solution. These results validate our error estimates in \cref{thm:main,rem:highregularity}, and also suggest that the time step size restriction remains necessary in the simulation of Gaussons. In fact, our further numerical experiments (not shown here) indicate that the time step size is needed even for a single Gausson initial datum without potential, where the exact solution is known to be Gausson for all time. 

\begin{figure}[htbp]
	\centering
	{\includegraphics[width=0.475\textwidth]{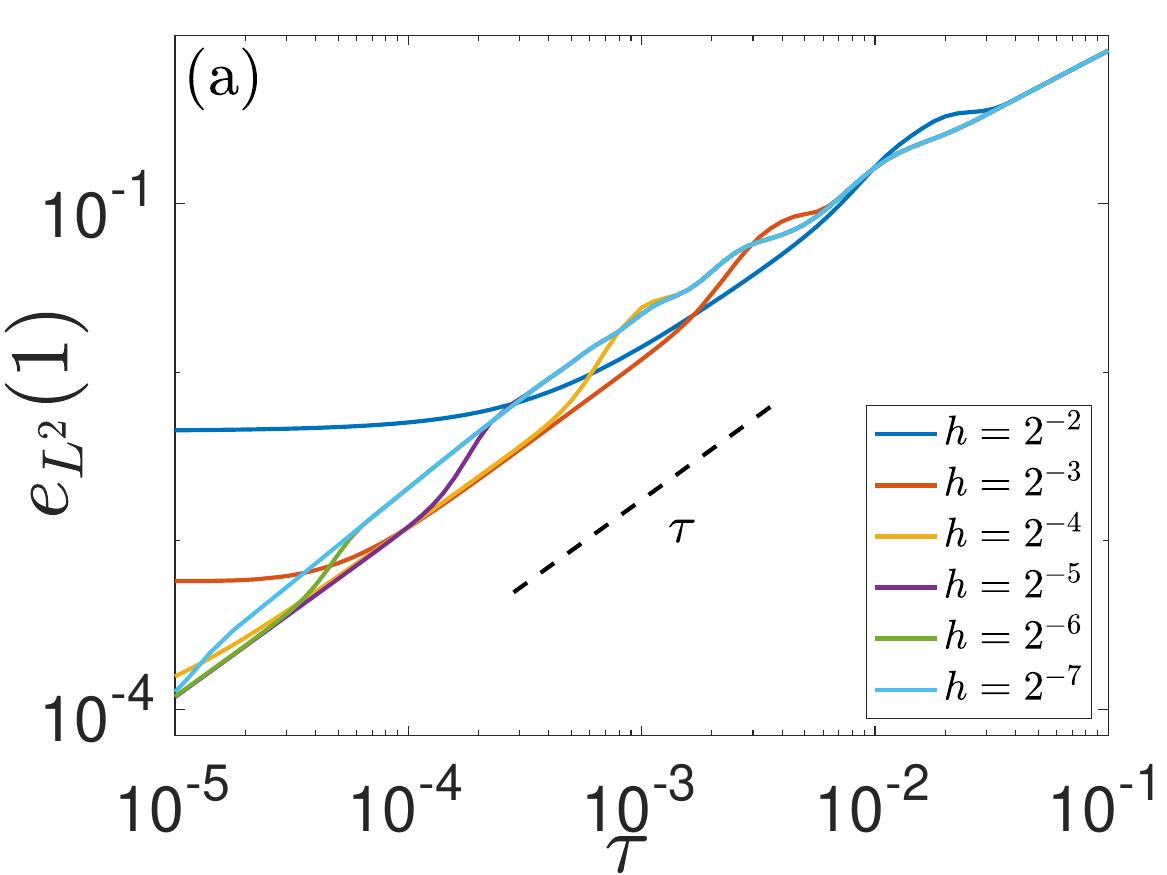}}\hspace{1em}
	{\includegraphics[width=0.475\textwidth]{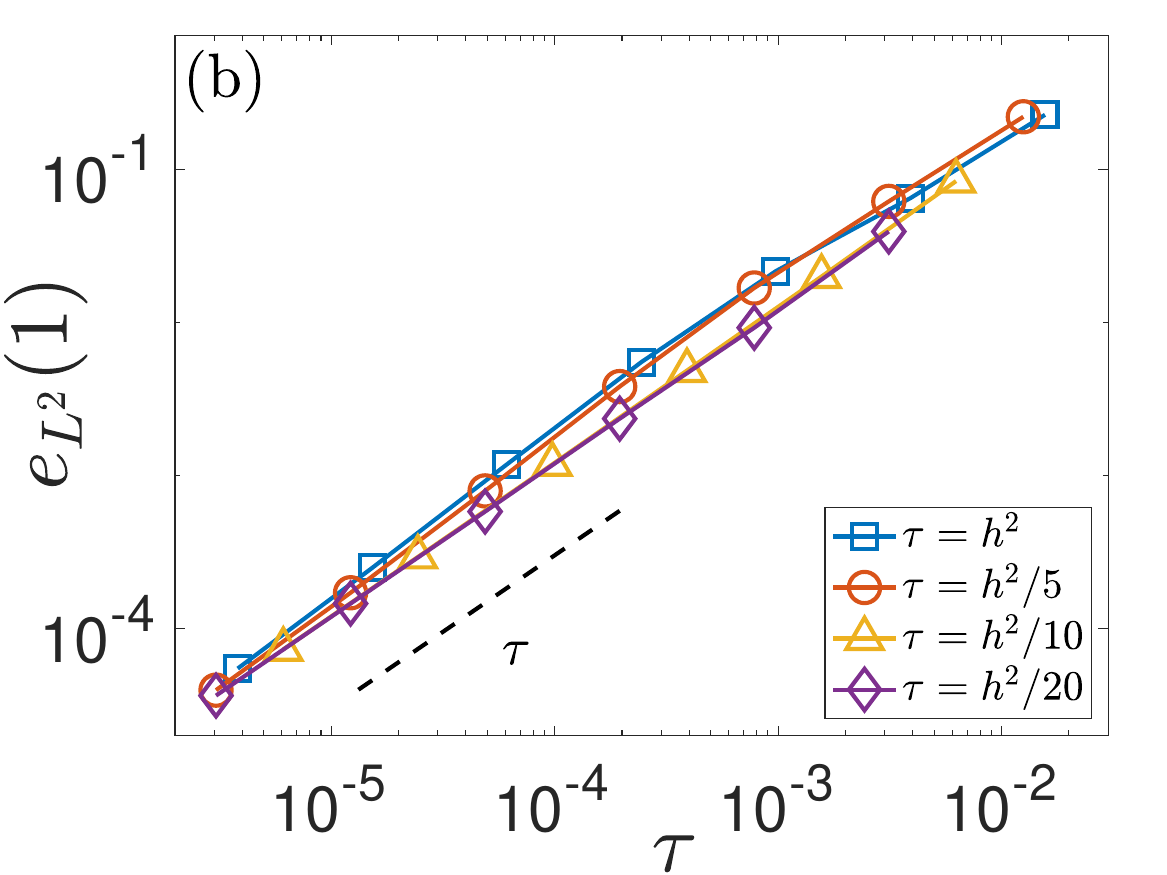}}
	\caption{Errors in $L^2$-norm of the EWI-FS method for the LogSE with $V = V_\text{sw}$ and the initial datum \cref{eq:ini2}: (a) varying $\tau$ for each fixed $h$ and (b) $\tau$ and $h$ satisfying $\tau/h^2 \equiv c$ with different ratio $c$}
	\label{fig:conv_dt_2Gaussian_ini_L2}
\end{figure}

\begin{figure}[htbp]
	\centering
	{\includegraphics[width=0.475\textwidth]{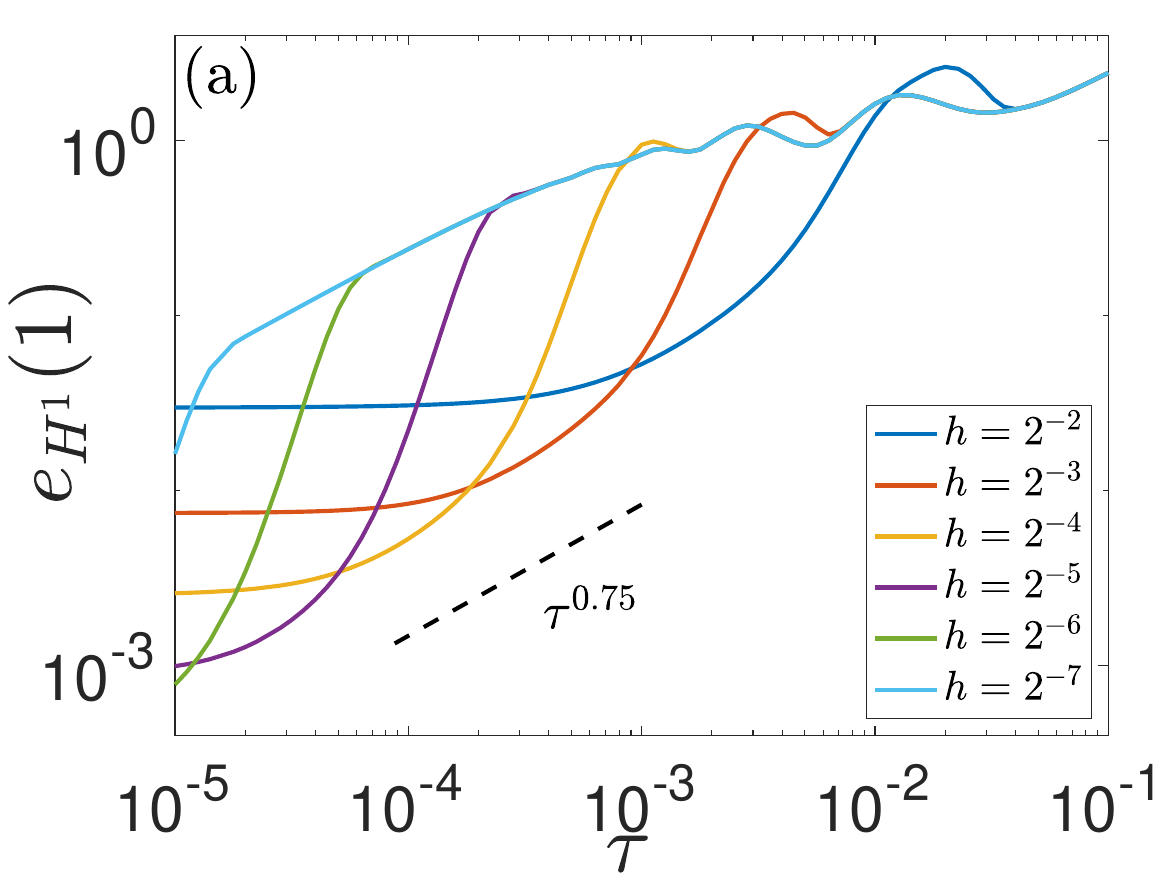}}\hspace{1em}
	{\includegraphics[width=0.475\textwidth]{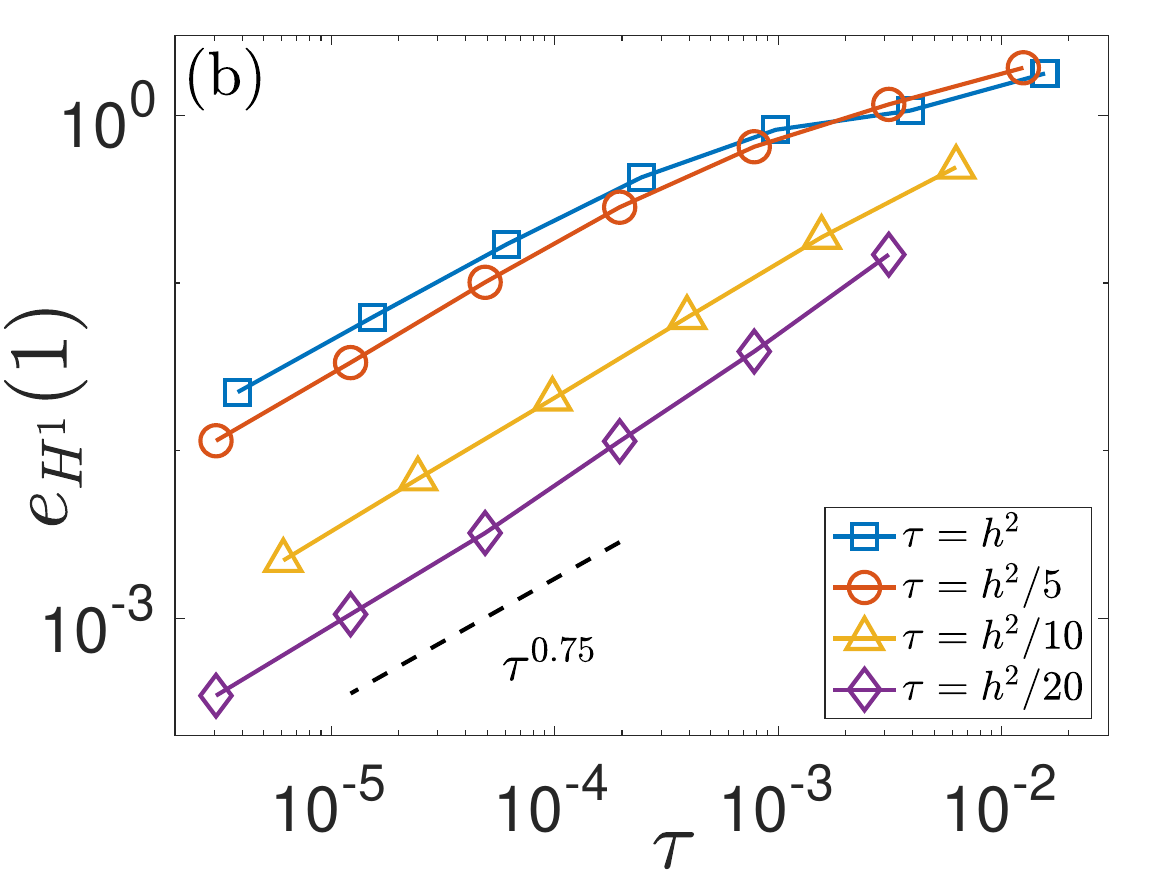}}
	\caption{Errors in $H^1$-norm of the EWI-FS method for the LogSE with $V = V_\text{sw}$ and the initial datum \cref{eq:ini2}: (a) varying $\tau$ for each fixed $h$ and (b) $\tau$ and $h$ satisfying $\tau/h^2 \equiv c$ with different ratio $c$}
	\label{fig:conv_dt_2Gaussian_ini_H1}
\end{figure}

{Finally, we present a numerical experiment to investigate the regularity threshold of the exact solution. We consider the LogSE \cref{LogSE} without potential (i.e. $V(x) \equiv 0$) and with a non-decaying initial datum $\psi_0(x) = \tanh(x)$ for $x \in \Omega$. In this example, we equip $\Omega$ with homogeneous Neumann boundary condition and use the cosine spectral method for spatial discretization as mentioned in \cref{rem:bc}. We test two cases with $\lambda = \pm 1$. The regularity of the numerical solution is estimated from the decay rate of its discrete cosine transform coefficients. From \cref{fig:regularity}, we see that the coefficients decay like $l^{-4}$ in both cases, suggesting that the exact solution is approximately in $H^{3.5}$. In fact, such $H^{3.5}$-regularity threshold is also observed in other numerical tests for odd initial data $\psi_0$ with $\psi_0'(0) \neq 0$. A heuristic explanation is that $x \ln x$ has $H^{1.5^-}$ regularity near the origin; hence by \cref{LogSE}, $\Delta \psi$ may inherit the same $H^{1.5^-}$ regularity, suggesting $\psi \in H^{3.5^-}$. Similar observation has been made for the NLSE \cref{NLSE} in \cite{bao2023_semi_smooth}. }

\begin{figure}[htbp]
	\centering
	{\includegraphics[width=0.475\textwidth]{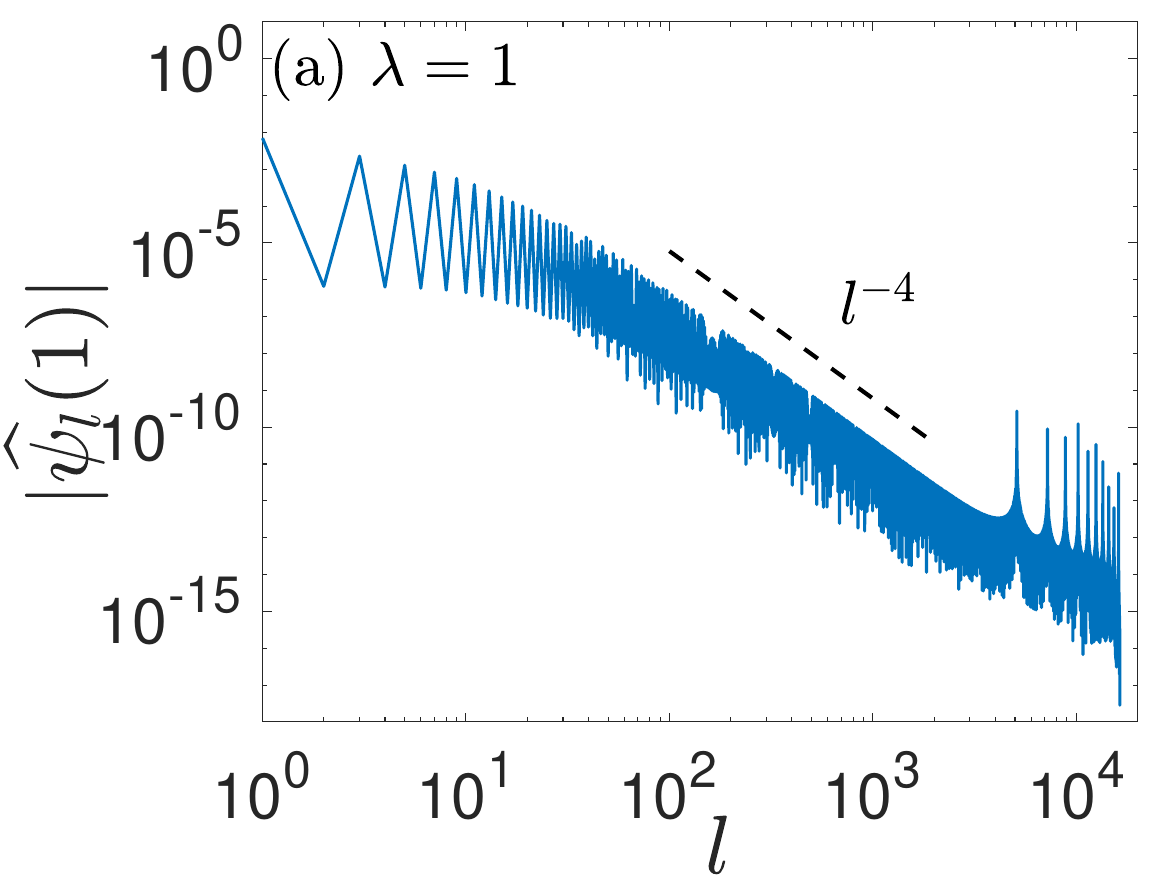}}\hspace{1em}
	{\includegraphics[width=0.475\textwidth]{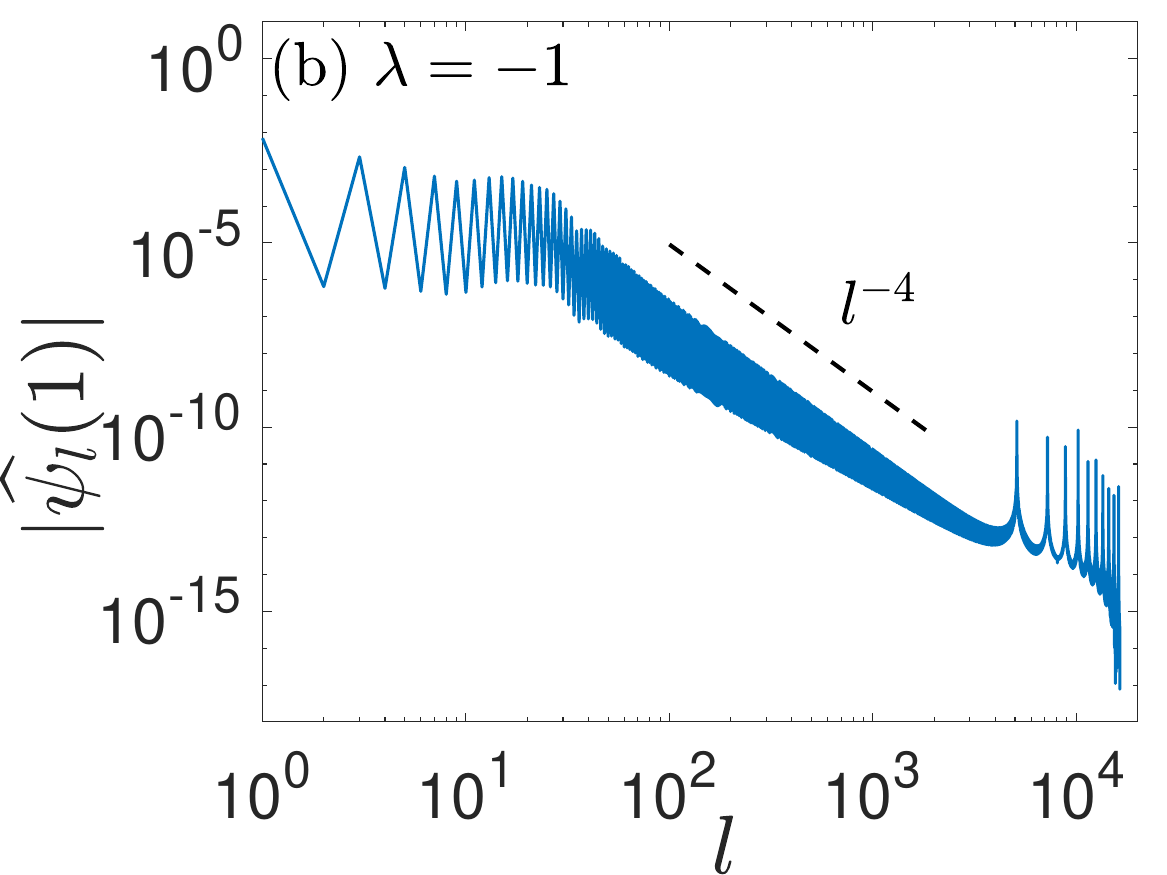}}
	\caption{Discrete cosine transform coefficients of the reference solution $ \psi^n \approx \psi(1) $ for the LogSE with $V = 0$ and the initial datum $\psi_0(x) = \tanh(x)$: (a) $\lambda = 1$ and (b) $\lambda = -1$}
	\label{fig:regularity}
\end{figure}

%	\begin{table}[h!]
	%		\renewcommand{\arraystretch}{1.5}
	%		\centering
	%		\begin{tabular}{c|ccccc}
		%			\thickhline
		%			& $\tau, h $ & $\tau/4, \  h/2$ & $\tau/4^2, \  h/2^2$ & $\tau/4^3, \  h/2^3$ & $\tau/4^4, \  h/2^4$ \\ \thickhline
		%			$e_{L^2}(1)$ &9.94E-2	&2.40E-2	&5.74E-3	&1.38E-3	&3.42E-4  \\ \hline
		%			order &- & 1.03 & 1.04 & 1.03 & 1.01 \\ \thickhline
		%			$e_{H^1}(1)$ &6.56E-01	&2.83E-01	&1.01E-01	&3.46E-02	&1.18E-02  \\ \hline
		%			order & - & 0.61 & 0.74 & 0.78 & 0.78 \\ \thickhline
		%		\end{tabular}
	%		\caption{Errors in $L^2$- and $H^1$-norms of the EWI-FS method with $(\tau, h)$-pairs satisfying $\tau \sim h^2$ highlighted in \cref{fig:conv_dt_2Gaussian_ini}}
	%		\label{tab:conv_dt_2Gaussian}
	%	\end{table}

\subsection{Application for soliton collision in 1D}
In this subsection, we apply the EWI-FS method to studying the soliton collision in a disorder medium characterized by a disorder potential $V = V_\text{d}$ given by
\begin{equation}\label{eq:poten2}
	V_\text{d}(x) = \text{real}\left(\sum_{l \in \mathcal{T}_{N_\text{Ref}}} (1+|\mu_l|^2)^{-\alpha/2-1/4} \xi_l e^{i \mu_l(x+L)} \right), \quad x \in \Omega = (-L, L), 
\end{equation}
where $\xi_l = \text{rand}(-1, 1) + i \ast \text{rand}(-1, 1)$ with $\text{rand}(-1, 1)$ returning a random number uniformly distributed in $(-1, 1)$, $N_\text{Ref} = 2^{18}$, and $\alpha > 0$ controlling the regularity of the potential. 

In the following simulation, we choose the computational domain $\Omega = (-32, 32)$ and the disorder potential $V_\text{d}$ \cref{eq:poten2} with $\alpha = 0$. The disorder potential $V_\text{d}$ used in the numerical simulation is shown in \cref{fig:plot_poten} which satisfies $\| V_\text{d} \|_{L^\infty} \approx 5.76$. The initial data is chosen as \cref{eq:ini2} with $x_0 = 4, k_1= k_2 = c_1 = c_2 =1$ and different velocity $v = 1, 2, 4, 8$. The numerical results are plotted in the right column of \cref{fig:dynamics}, where, for the comparison purpose, we also show the results in the absence of the disorder potential (i.e., $V(x) \equiv 0$) in the left column of \cref{fig:dynamics}. 

\begin{figure}[htbp]
	\centering
	%		{\includegraphics[width=0.475\textwidth]{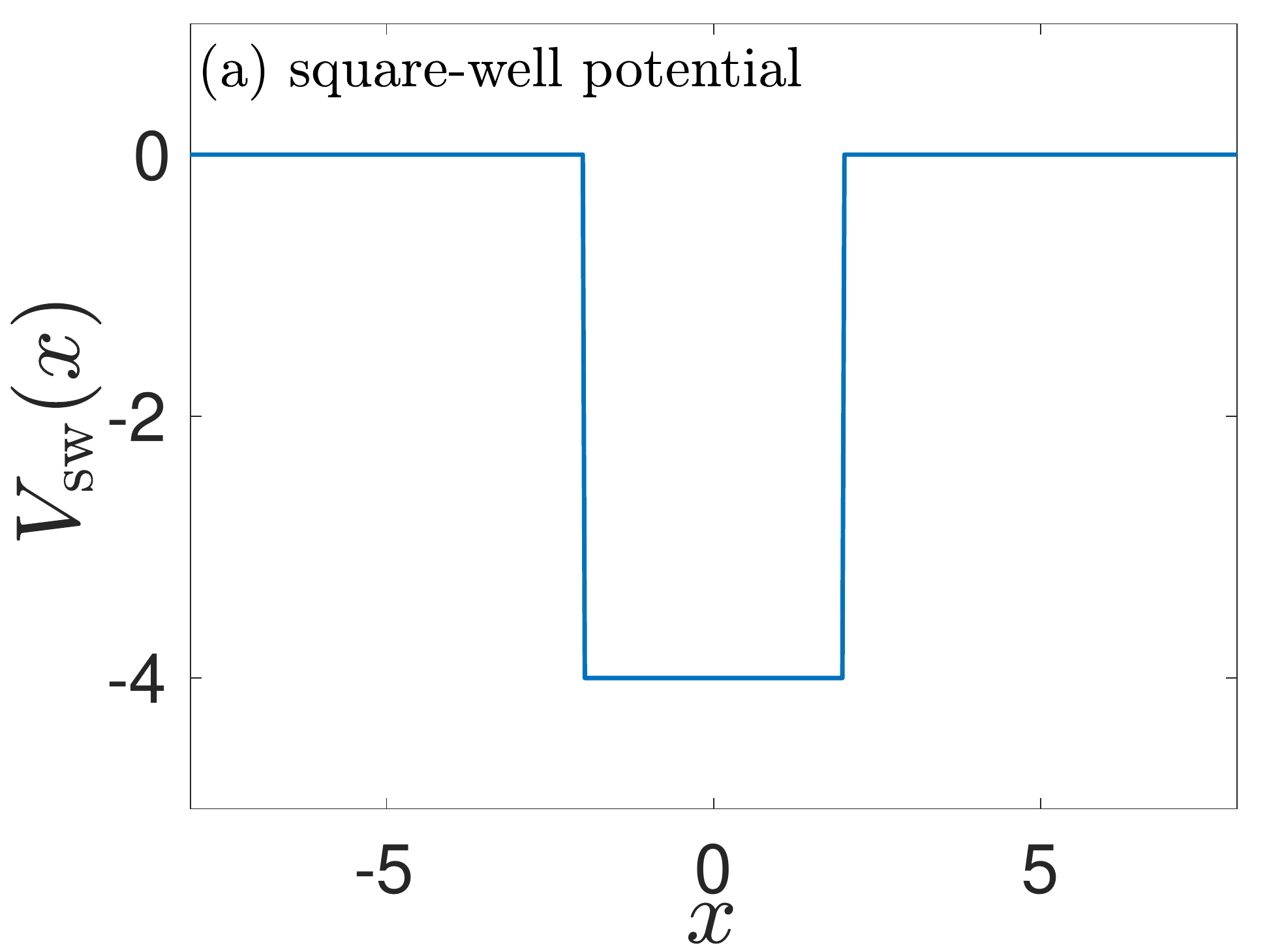}}\hspace{1em}
	{\includegraphics[width=1\textwidth]{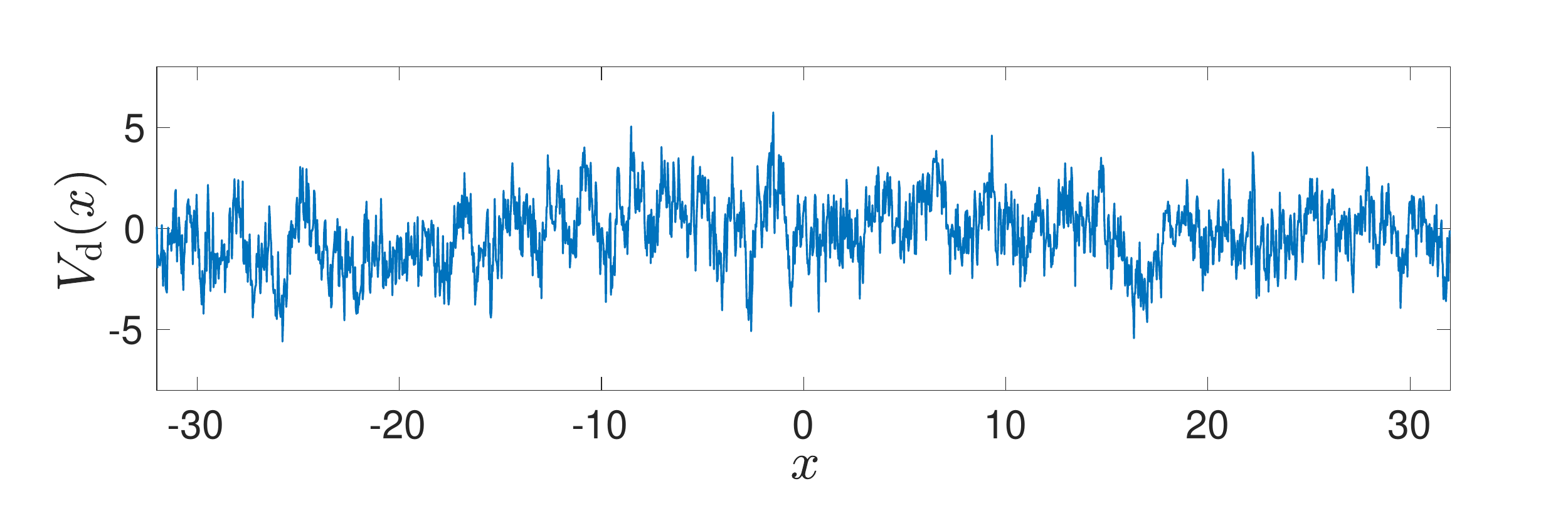}}
	\caption{An example of the disorder potential $V_\text{d}$}
	\label{fig:plot_poten}
\end{figure}

From \cref{fig:dynamics}, we observe that the effects of the disorder potential diminish as the velocity increases. In the lowest velocity case $v=1$, the two Gaussons are completely trapped in the potential. As the velocity increases, the trapping effect diminishes progressively and waves of smaller wavelength and amplitude are generated within the disorder potential. 

\begin{figure}[htbp]
	\centering
	{\includegraphics[width=0.425\textwidth]{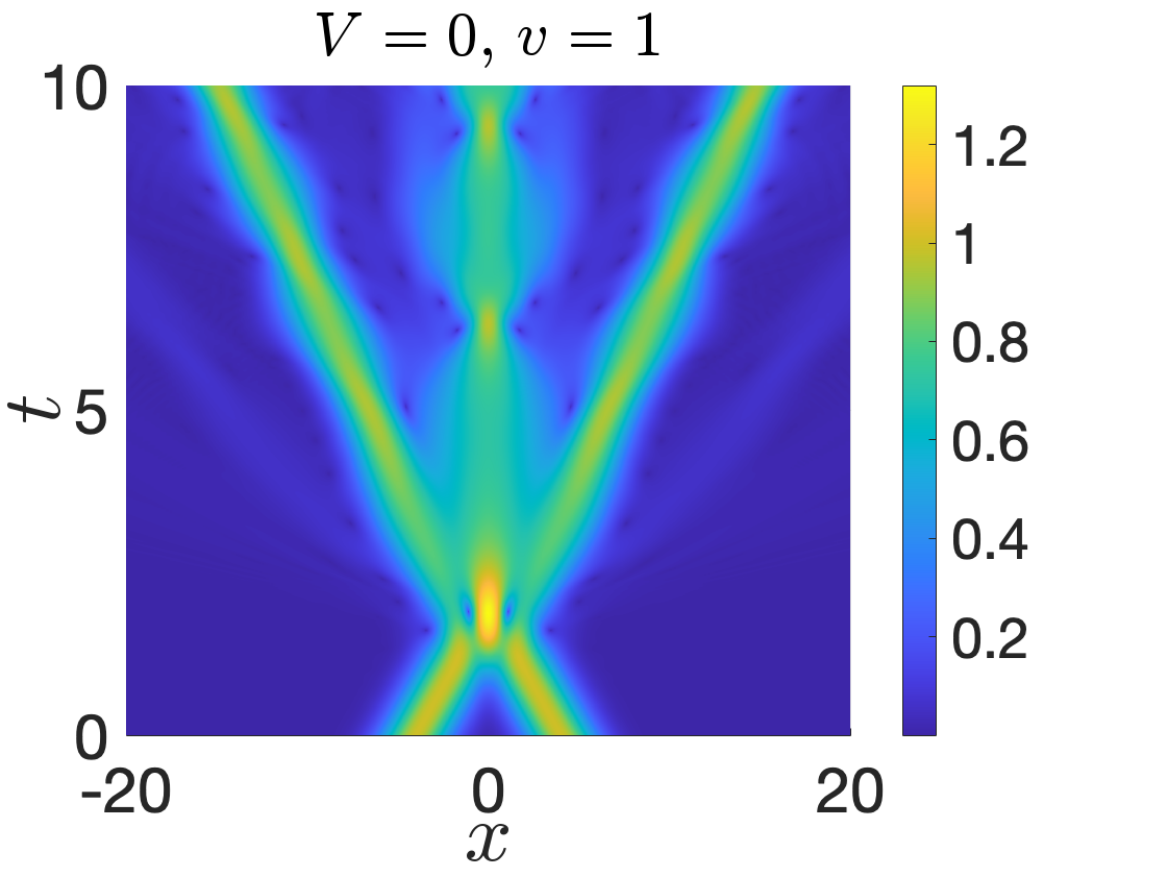}}\hspace{1em}
	{\includegraphics[width=0.425\textwidth]{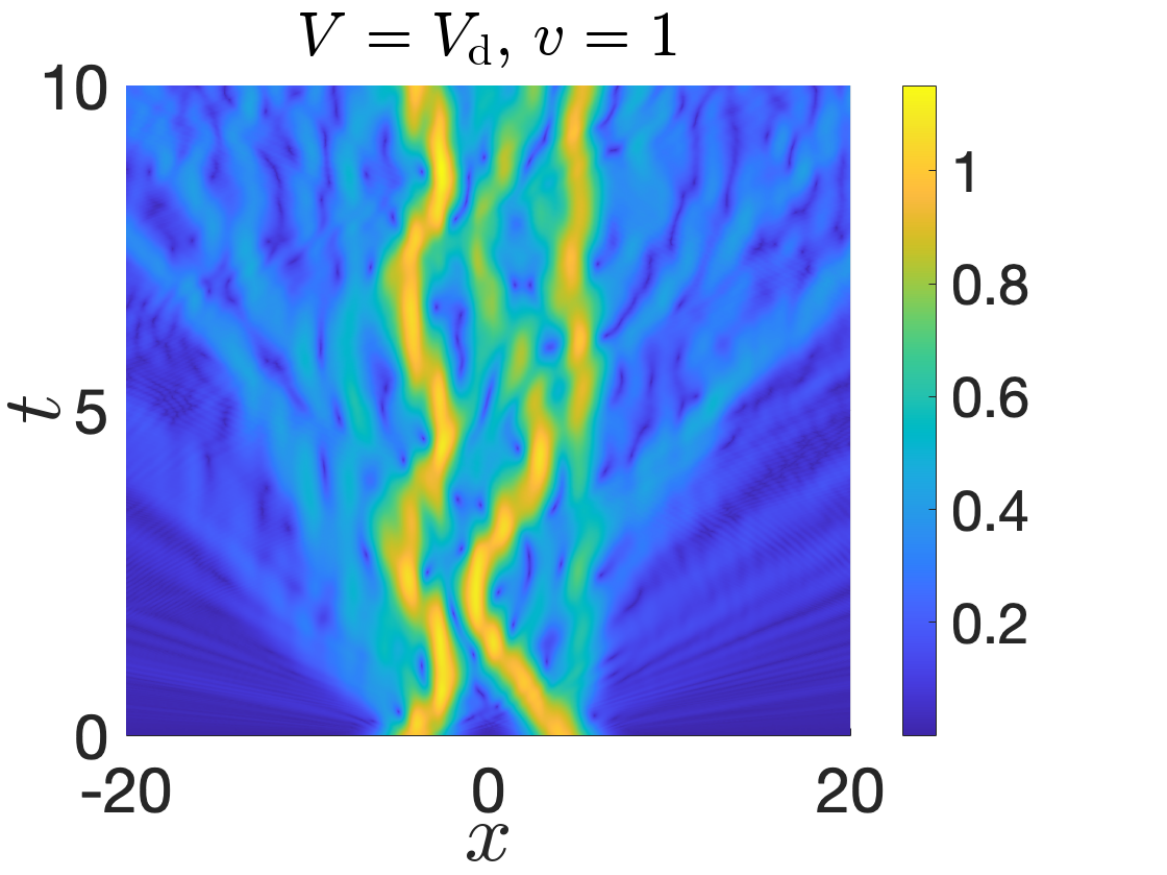}}\\
	{\includegraphics[width=0.425\textwidth]{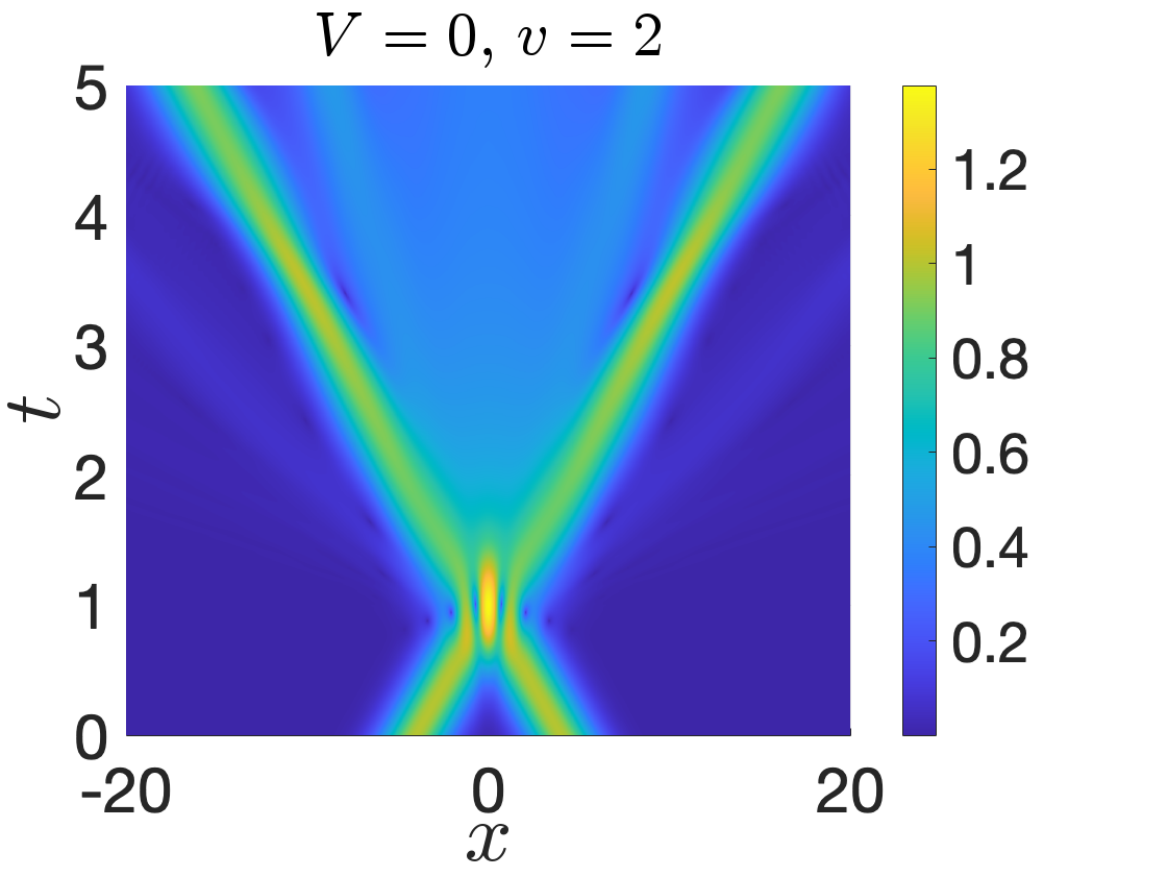}}\hspace{1em}
	{\includegraphics[width=0.425\textwidth]{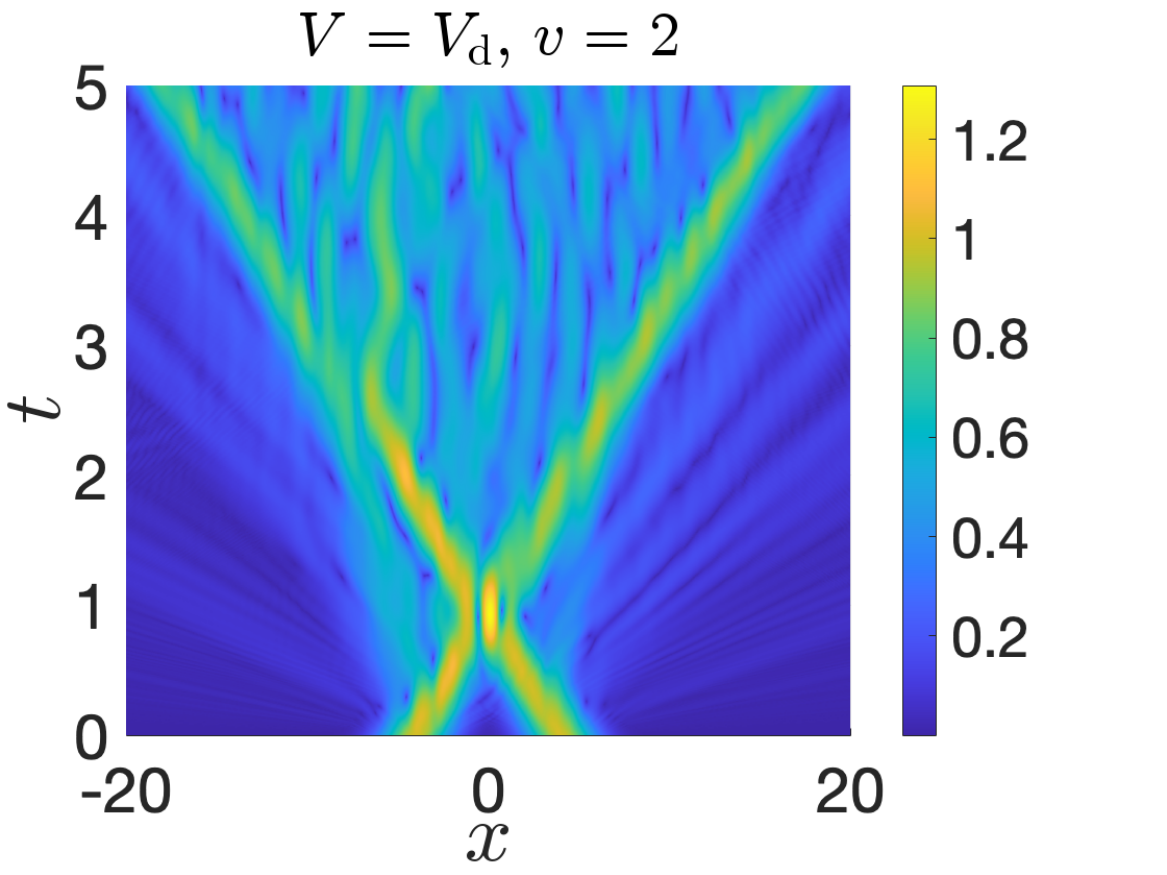}}\\
	{\includegraphics[width=0.425\textwidth]{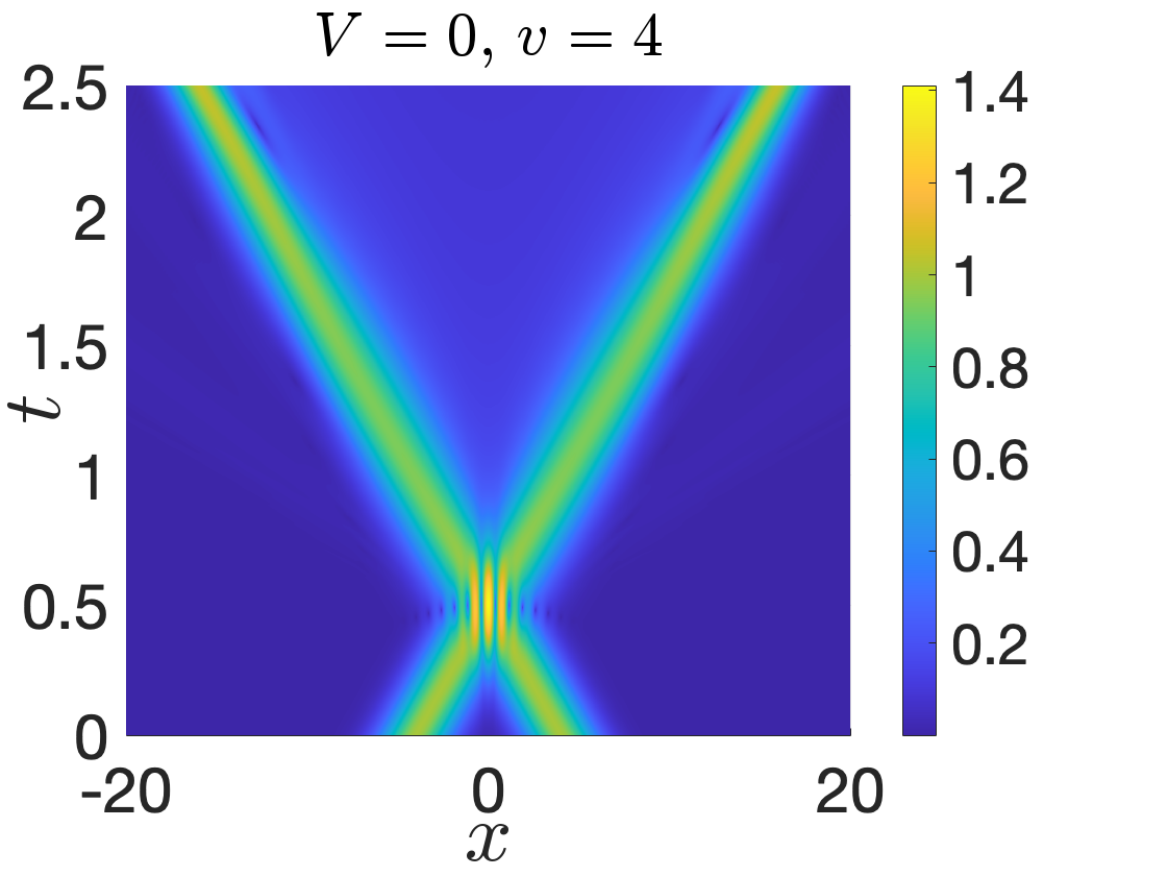}}\hspace{1em}
	{\includegraphics[width=0.425\textwidth]{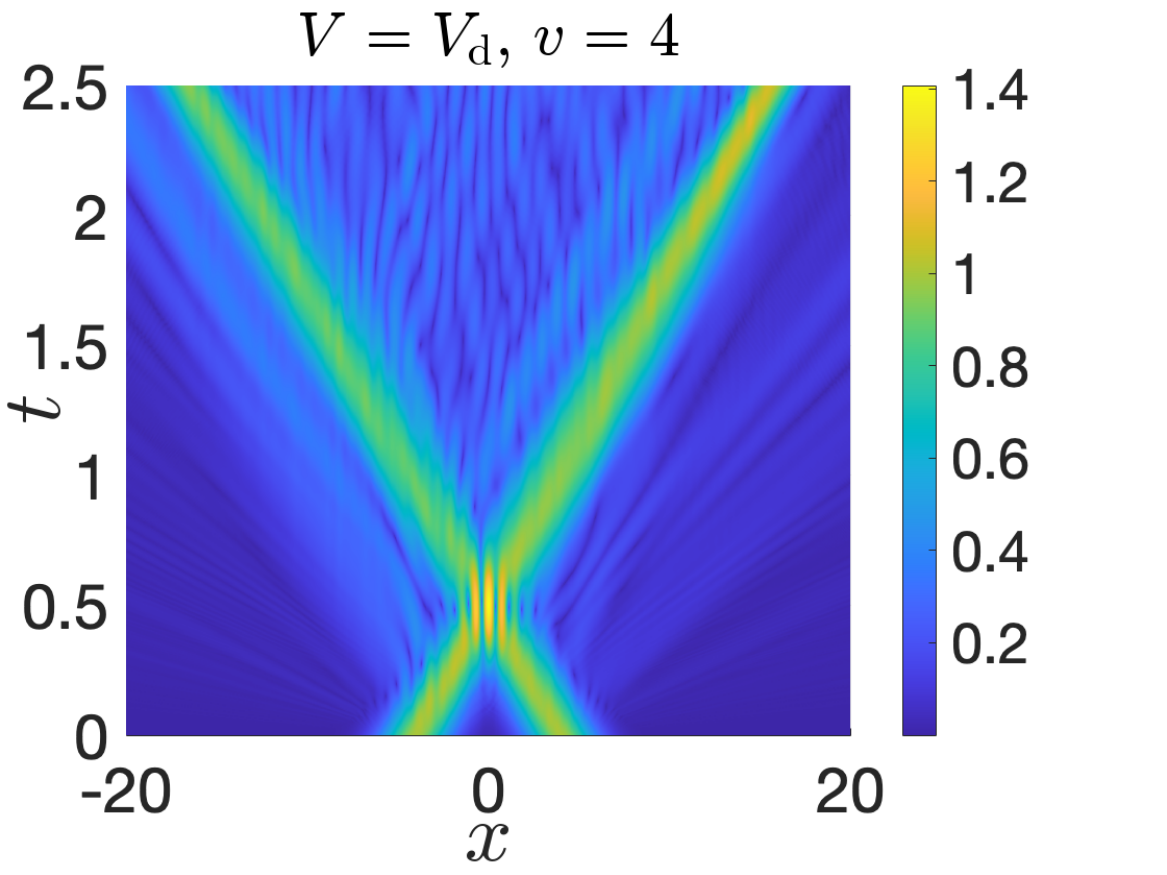}}\\
	{\includegraphics[width=0.425\textwidth]{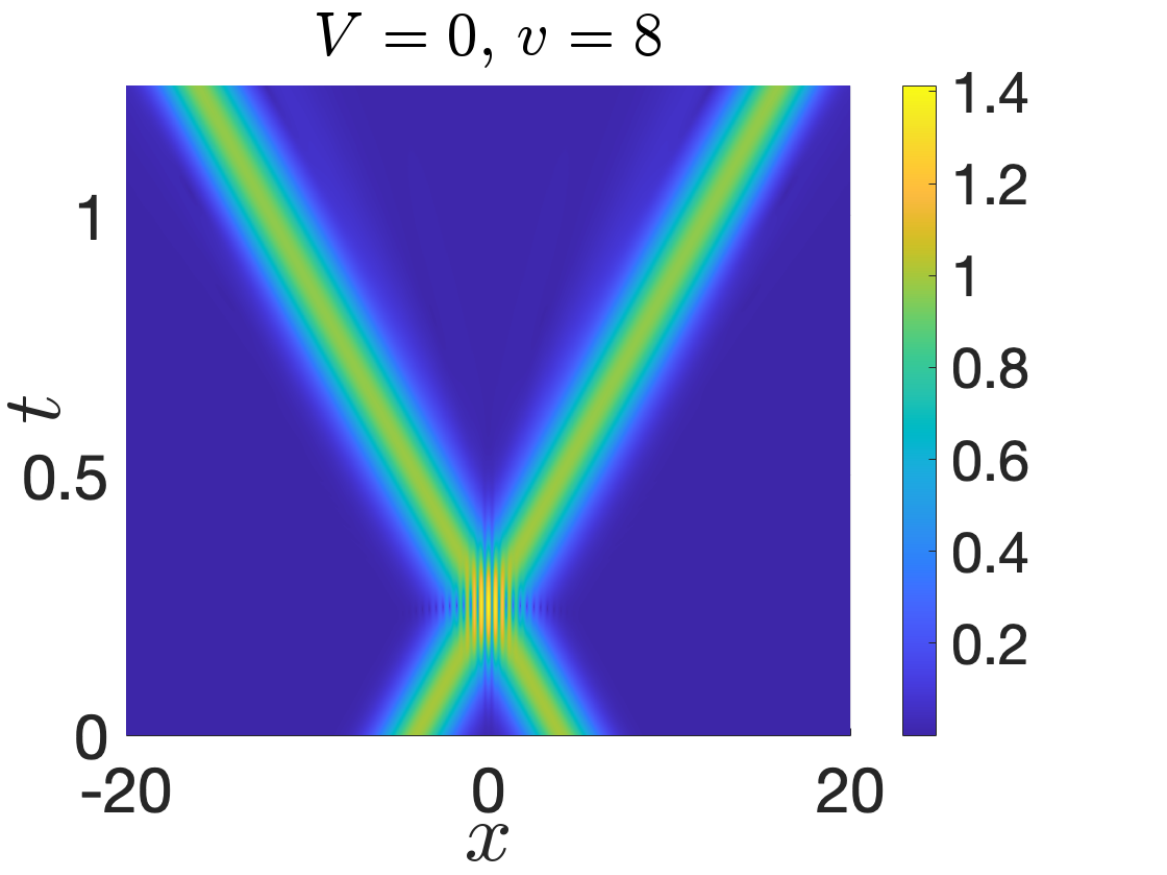}}\hspace{1em}
	{\includegraphics[width=0.425\textwidth]{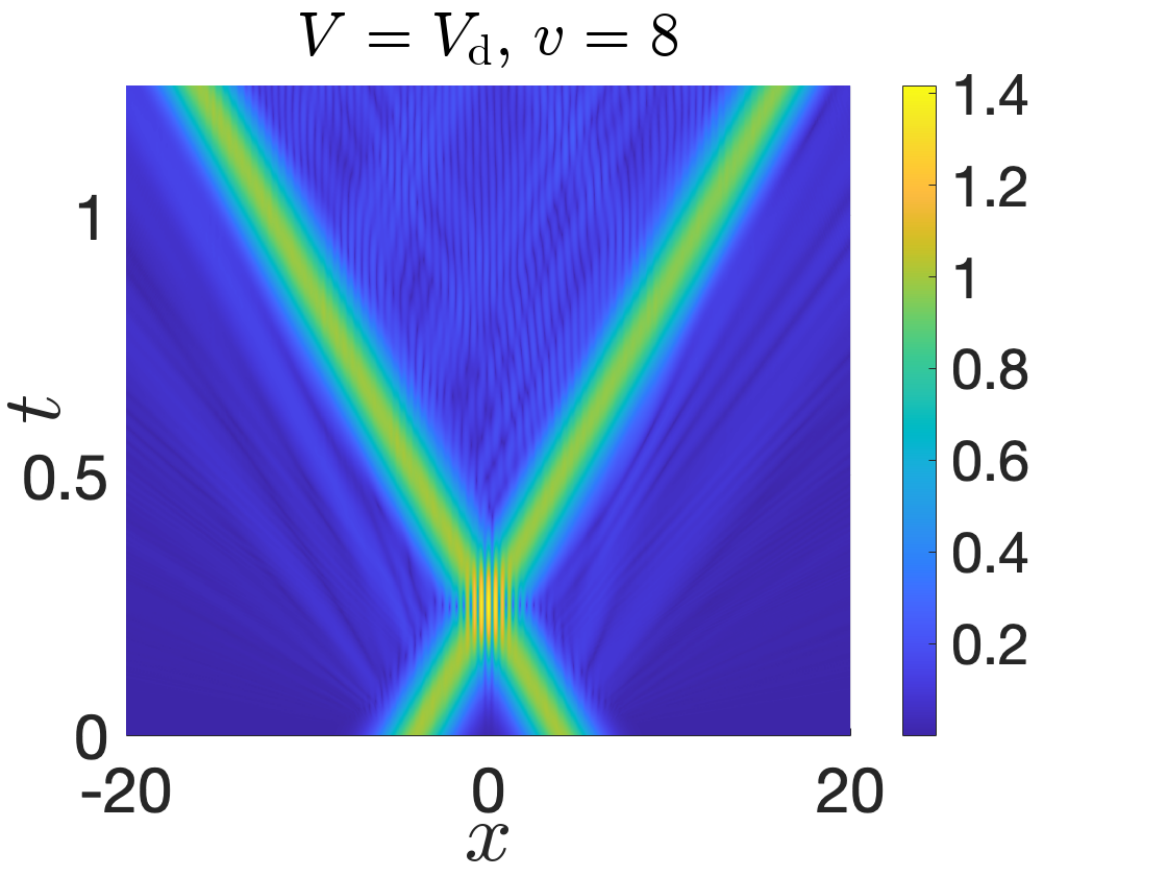}}\\
	\caption{Plots of $\sqrt{|\psi(x, t)|}$ for the LogSE under $V=0$ (left) and $V=V_\text{d}$ (right), and initial datum \cref{eq:ini2} with different velocity $v = 1, 2, 4, 8$ (from top to bottom)}
	\label{fig:dynamics}
\end{figure}

\subsection{Application for vortex dipole dynamics in 2D}
In this subsection, we apply the EWI-FS method \cref{eq:EWI-FS_scheme} to investigating the interactions of a vortex dipole under the LogSE \cref{LogSE} dynamics. Specifically, we consider a two-dimensional set-up with $d=2$, $\Omega=(-32, 32) \times (-32, 32)$ equipped with homogeneous Neumann boundary condition \citep{bao2014vortex}. In this example, we set $V(\vx) \equiv 0$ for $\vx \in \Omega$. The initial data is choose as
\begin{equation}\label{eq:ini_vt}
	\psi(\vx, 0) = \phi_+(x-x_0, y) \times \phi_-(x+x_0, y), \quad \vx = (x, y)^{\rm T} \in \Omega, 
\end{equation} 
where
\begin{equation}
	\phi_\pm(x, y) = u(|\vx|) e^{ \pm i \theta(\vx)}, \quad \vx = (x, y)^{\rm T} \in \Omega.  
\end{equation}
Here, $\theta(\vx) \in [0, 2\pi)$ is defined such that $x/|\vx| = \cos (\theta(\vx))$ and $y/|\vx| = \sin(\theta(\vx))$ for any $\mathbf{0} \neq \vx \in \R^2$, and $ u = u_\lambda $ is the solution of the following equation 
\begin{equation}\label{eq:vt_ini}
	-\frac{1}{r} \frac{\rmd}{\rmd r} \left(r\frac{\rmd}{\rmd r} \right) u(r) + \frac{u(r)}{r^2} + \lambda \ln(|u(r)|^2)u(r) = 0, \qquad 0<r<R_0, 
\end{equation}
with an inhomogeneous Dirichlet boundary condition
\begin{equation}
	u(0) = 0, \qquad u(R_0) = 1. 
\end{equation}
We choose $R_0 = 8$ and solve \cref{eq:vt_ini} numerically (see \cref{fig:f} for some examples of $u$). Moreover, we set $u(r) = 1$ for all $r \geq R_0$. 

\begin{figure}[htbp]
	\centering
	{\includegraphics[width=0.75\textwidth]{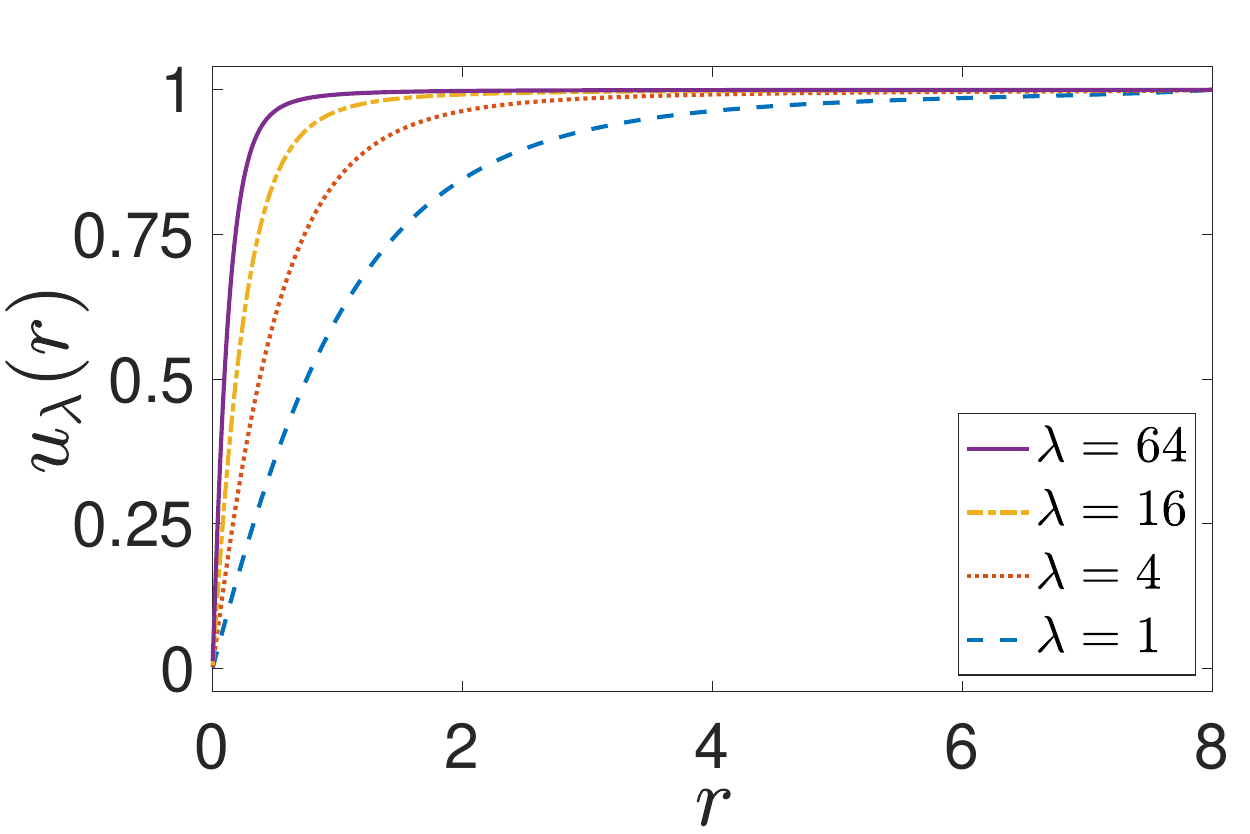}}
	\caption{Plots of solutions $u_\lambda(r)$ of \cref{eq:vt_ini} for different $\lambda = 1, 4, 16, 64$}
	\label{fig:f}
\end{figure}

We first study the influence of the distance between the vortices on the dynamics. To this end, we choose $\lambda = 16$ and consider {$x_0 = 0.5$ and $x_0 = 0.25$}. In computation, the mesh size in both directions are chosen as $h_x = h_y = 2^{-5}$ with the time step size $\tau = 10^{-5}$.  In \cref{fig:vortex}, we plot $|\psi(x, y, t)|$ at different time for both cases with the top row for $x_0 = 0.5$ and the bottom row for $x_0 = 0.25$. 

As illustrated in \cref{fig:vortex}, distinctly different dynamics emerge. For $x_0 = 0.5$, the two vortices move together in the 
$y$-direction while remaining stable and well-separated. In contrast, when the vortices are closer ($x_0 = 0.25$), they begin to merge and finally disappear.

\begin{figure}[htbp]
	\centering
	{\includegraphics[width=1\textwidth]{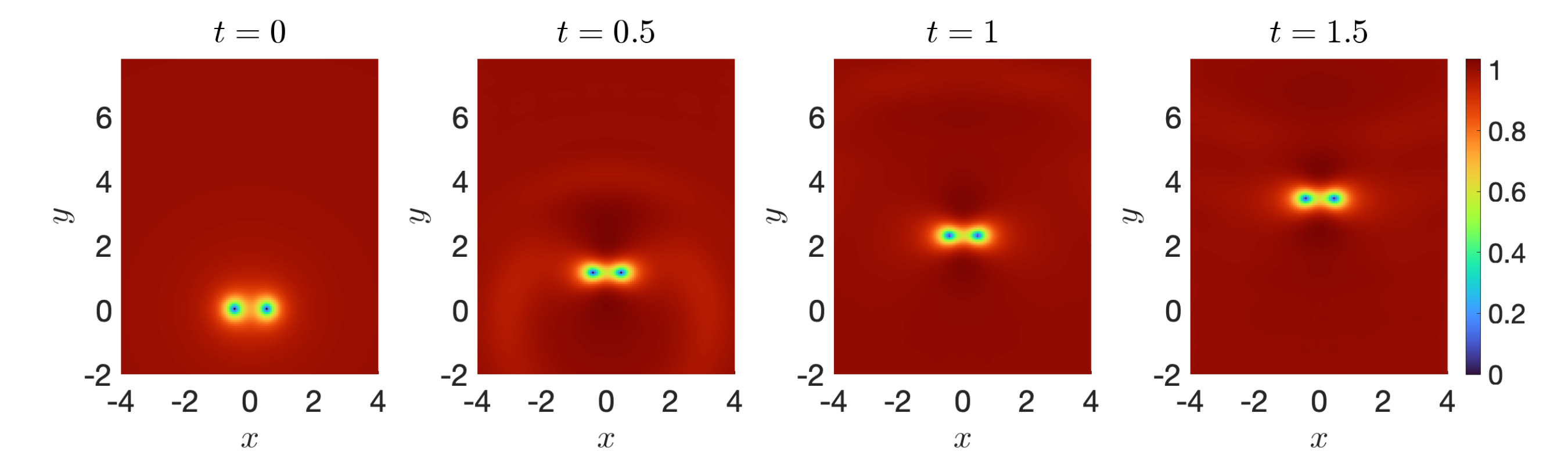}}\\
	{\includegraphics[width=1\textwidth]{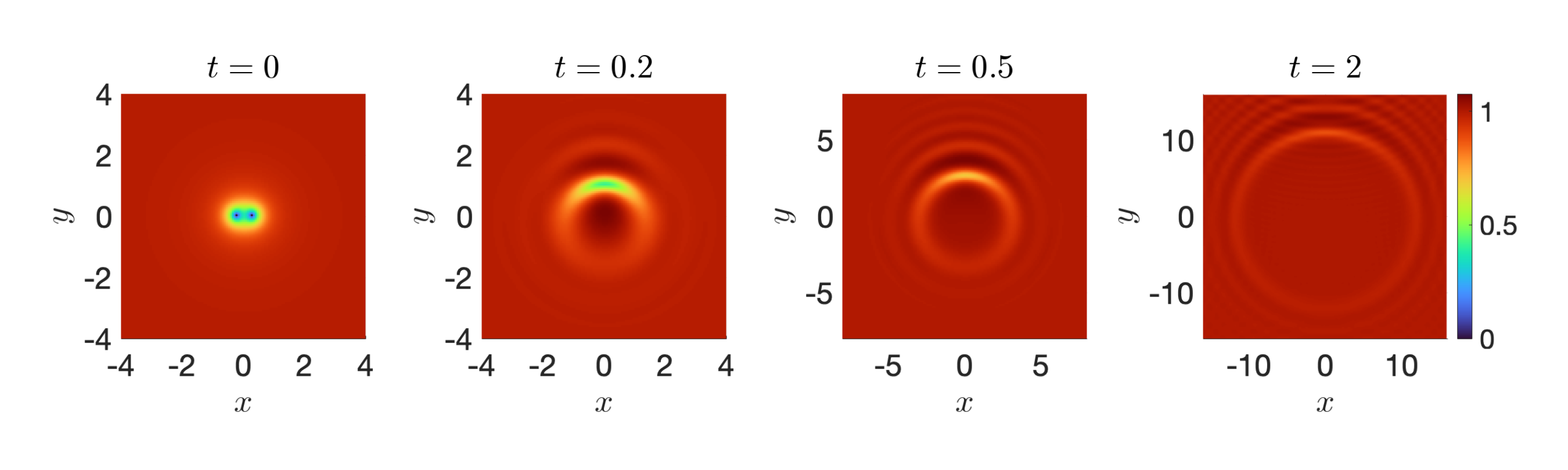}}
	\caption{Plots of $|\psi(x, y, t)|$ at different $t$ for the LogSE with $\lambda = 16$ and the initial datum \cref{eq:ini_vt} with $x_0 = 0.5$ (top) and $x_0 = 0.25$ (bottom). }
	\label{fig:vortex}
\end{figure}

Next, we examine the vortex dipole dynamics under the LogSE with $\lambda = -1$. \Cref{fig:vortex2} depicts $|\psi(x, y, t)|$ at various times under the initial datum \cref{eq:ini_vt} with $x_0 = 0.5$. In fact, in the focusing case, the dynamics for $x_0 = 0.5$ and $x_0$ are similar, and differ significantly from those in the defocusing case discussed earlier. The vortex core sizes expand rapidly, and the two vortices merge regardless of their initial distance. Finally, the two vortex form a single large vortex with small peaks inside and wave patterns radiating outward. 

\begin{figure}[htbp]
	\centering
	{\includegraphics[width=1\textwidth]{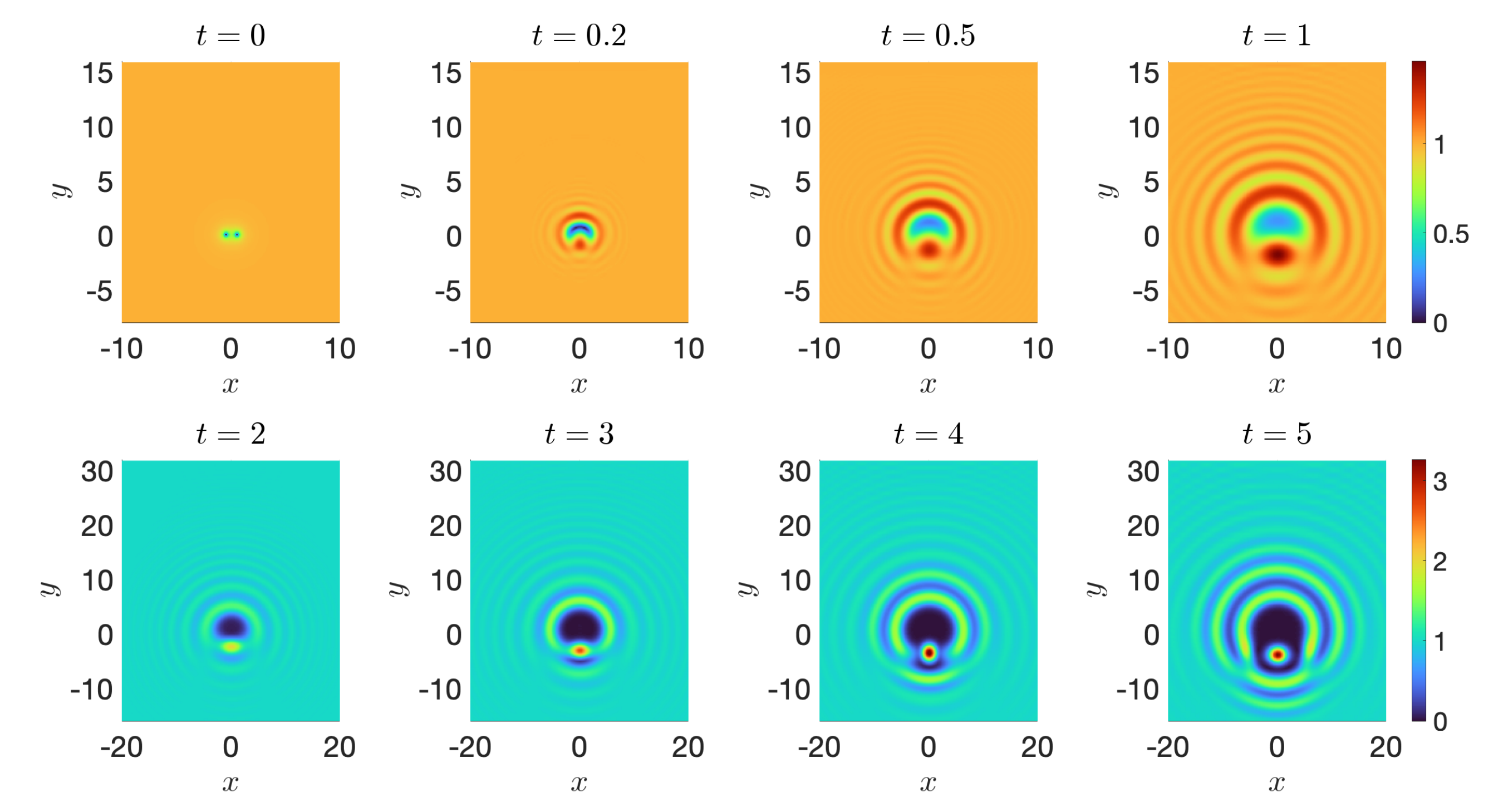}}\\
	%	{\includegraphics[width=1\textwidth]{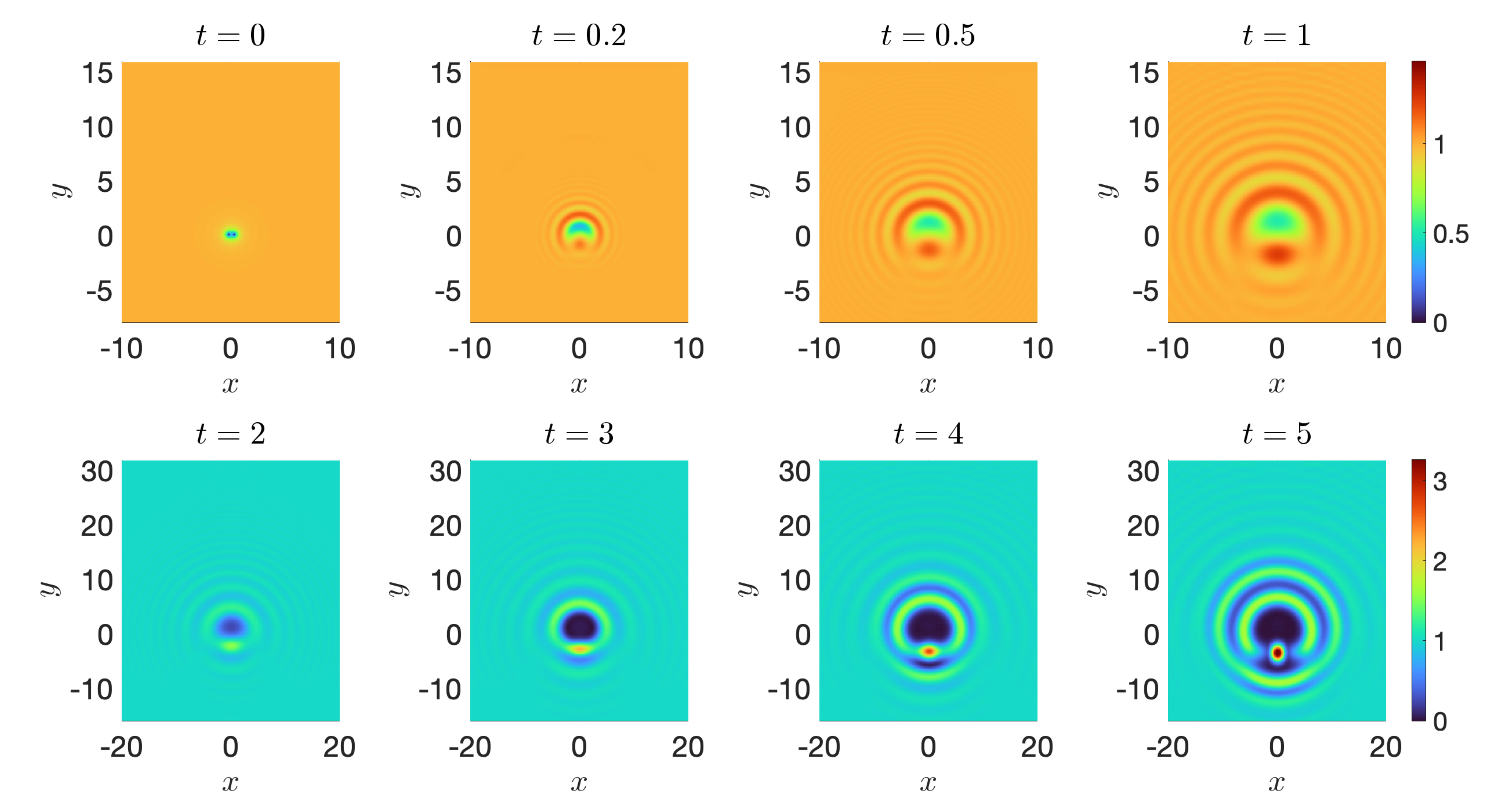}}
	\caption{Plots of $|\psi(x, y, t)|$ at different $t$ for the LogSE with $\lambda = -1$ and the initial datum \cref{eq:ini_vt} with $x_0 = 0.5$. }
	\label{fig:vortex2}
\end{figure}

\section{Conclusion}\label{sec:5}
We proposed and analyzed an EWI-FS method for the LogSE with low regularity potential. The EWI-FS method was proved to be, up to some logarithmic factors, first-order convergent in $L^2$-norm and half-order convergent in $H^1$-norm under the assumption of $H^2$-solution of the LogSE, which is theoretically guaranteed. Our analysis also revealed a CFL-type time step size restriction that is necessary in the implementation of the EWI-FS method to solve the LogSE. Extensive numerical results were reported to validate our error estimates and to show the necessity of the time step size restriction. Moreover, we have applied the EWI-FS to studying the soliton interaction in 1D and the vortex dipole dynamics in 2D. The validity of the current results for time-splitting methods will be investigated in our future work.

\section*{Acknowledgments}
The work of the first author was partially supported by the Ministry of Education of Singapore under its AcRF Tier 1 funding with WBS A-8003584-00-00. The work of the second author was partially supported by the National Natural Science Foundation of China (Grant No. 12401507) and the Science and Technology Project of Beijing Municipal Education Commission (Grant No. KM202410005013). The work of the third author was partially supported by the Ministry of Education of Singapore under its AcRF Tier 2 funding MOE-T2EP20222-0001 (A-8001562-00-00).

%\section*{Funding}

%\bibliographystyle{plain}
%\bibliography{mybib}

%USE THE BELOW OPTIONS IN CASE YOU NEED AUTHOR YEAR FORMAT.
\bibliographystyle{abbrvnat}
%\bibliography{reference}

\end{document}